\documentclass{amsart}

\usepackage{epsfig,graphicx,amsfonts,amssymb,amsmath,amscd,amsthm,color,xy,pgf,tikz}
\usetikzlibrary{arrows}

\makeindex

\numberwithin{equation}{section}

\newtheorem{thm}{Theorem}[section]
\newtheorem{lem}[thm]{Lemma}
\newtheorem{prop}[thm]{Proposition}
\newtheorem{cor}[thm]{Corollary}

\theoremstyle{definition}
\newtheorem{defin}[thm]{Definition}

\newtheorem{exmp}[thm]{Example}

\theoremstyle{remark}
\newtheorem{rem}{Remark}

\newtheorem*{notation}{Notation}

\newcommand{\Ker}{{\rm Ker}}
\newcommand{\rank}{{\rm rank}}

\newcommand{\val}{{\rm val}}

\newcommand{\Span}{{\rm Span}}
\newcommand{\Spec}{{\rm Spec}}

\newcommand{\CCC}{{\mathbb C}}

\newcommand{\SSS}{{\mathbb S}}

\newcommand{\RR}{{\mathbb R}}
\newcommand{\ZZ}{{\mathbb Z}}
\newcommand{\PP}{{\mathbb P}}

\newcommand{\GG}{{\mathbb G}}

\newcommand{\QQ}{{\mathbb Q}}

\newcommand{\AAA}{{\mathbb A}}

\newcommand{\CW}{{\mathcal W}}
\newcommand{\CO}{{\mathcal O}}

\newcommand{\CM}{{\mathcal M}}

\newcommand{\CE}{{\mathcal E}}

\newcommand{\ft}{{\mathfrak t}}
\newcommand{\fh}{{\mathfrak h}}
\newcommand{\fe}{{\mathfrak e}}
\newcommand{\fm}{{\mathfrak m}}
\newcommand{\fr}{{\mathfrak r}}

\DeclareMathAlphabet{\mathpzc}{OT1}{pzc}{m}{it}

\newcommand{\ord}{{\rm ord}}

\def\:{\colon}
\def\val{\nu}
\def\oF{\overline{F}}

\def\blambda{\boldsymbol\lambda}
\def\balpha{\boldsymbol\alpha}
\def\bbeta{\boldsymbol\beta}

\def\bchi{\boldsymbol\chi}
\def\bO{\boldsymbol O}
\def\b1{\boldsymbol 1}
\def\bold0{\boldsymbol 0}
\def\bzeta{\boldsymbol\zeta}
\def\be{\boldsymbol e}
\def\bq{\boldsymbol q}
\def\by{\boldsymbol y}
\def\bphi{\boldsymbol \phi}

\def\bvarphi{\boldsymbol \varphi}
\def\bpsi{\boldsymbol \psi}
\def\Tr{\mathpzc{Tr}}
\def\tr{\mathpzc{tr}}

\begin{document}

\title{Enumeration of rational curves with cross-ratio constraints}
\author{Ilya Tyomkin}

\thanks{Partially supported by German-Israeli Foundation under grant agreement 1174-197.6/2011.}

\address{Department of Mathematics, Ben-Gurion University of the Negev, P.O.Box 653, Be'er Sheva, 84105, Israel}
\email{tyomkin@math.bgu.ac.il}

\begin{abstract}
In this paper we prove the algebraic-tropical correspondence for stable maps of rational curves with marked points to toric varieties such that the marked points are mapped to given orbits in the big torus and in the boundary divisor, the map has prescribed tangency to the boundary divisor, and certain quadruples of marked points have prescribed cross-ratios. In particular, our results generalize the results of Nishinou-Siebert \cite{NS06}. The proof is very short, involves only the standard theory of schemes, and works in arbitrary characteristic (including the mixed characteristic case).
\end{abstract}
\keywords{Enumeration of rational curves, algebraic and tropical geometry.}
\maketitle

\section{Introduction}

\subsection{Background}
Enumeration of curves in algebraic varieties is a classical problem that has a long history going back to Ancient Greeks. Many tools have been developed to approach enumerative problems including Schubert calculus, intersection theory, degeneration techniques, quantum cohomology etc.

In 1989, Ran \cite{Ran89} proposed a recursive procedure based on degeneration techniques for enumeration of nodal curves of given degree and genus in the plane satisfying point constraints. Several years later, there was a major break-through in the problem, when Kontsevich introduced the moduli spaces of stable maps \cite{K95,KM94}, and used them to get recursive formulae for the number of rational plane curves of degree $d$ passing through $3d-1$ points in general position. Kontsevich interpreted the enumerative invariants as Gromov-Witten invariants, and deduced information about them from properties of the quantum cohomology.

Stated in more classical terms, Kontsevich's recursion can be obtained as follows: one considers the curves with $3d$ marked points such that the first two are mapped to two given lines, the remaining $3d-2$ to points in general position, and the cross-ratio with respect to the first four points is a parameter. Specialization of the cross ratio to the values $0$ and $1$ then gives the desired recursion.

Using various degeneration techniques recursive formulae for different target spaces and higher genera have been obtained in a series of works of Pandharipande \cite{Pan97}, Caporaso-Harris \cite{CH98-1,CH98-3}, Vakil \cite{Vak00-2}, and others.

In the beginning of the century, Mikhalkin \cite{Mik05} proposed a new approach to the problem. He introduced piece-wise linear objects, called tropical curves, and proved that there is a natural correspondence between algebraic and tropical curves, that in good cases allows one to obtain closed formulae for the number of algebraic curves in terms of the tropical counterpart. The tropical approach turned out to be a powerful tool also in mirror symmetry and in real algebraic geometry, where it was a break-through, and in particular, led to the calculation of Welschinger invariants in many interesting cases, see e.g., \cite{IKS13,Mik05,Shu06}.

Tropical geometry is now a rapidly developing field, that includes the study of the combinatorial piece-wise linear side of tropical varieties as well as the link between tropical, algebraic, and Berkovich analytic geometry. On one side, many tropical analogs of classical problems have been studied in the recent years. In particular, Gathmann and Markwig proved tropical analogs of formulae of Kontsevich and Caporaso-Harris \cite{GM07,GM08}. On the other side, several new proofs of known results such as Brill-Noether Theorem \cite{CDPR12} have been found, as well as tropical proofs of algebra-geometric statements that used to be beyond the reach of the classical methods, e.g., Zariski's theorem in positive characteristic \cite{Tyo13}.

Since 2005, few algebraic proofs of various versions of Mikhalkin's correspondence have been obtained by Nishinou-Siebert \cite{NS06}, Shustin \cite{Shu05}, the author \cite{Tyo12}, Ranganathan \cite{Ran15} and others. However, the proofs are relatively complicated and involve techniques such as deformation theory, log-geometry, stacks, rigid analytic spaces etc.; and all but \cite{Tyo12} assume the ground field to be of characteristic zero.

\subsection{The goals of the paper, the results, and the approach}
In the current paper we study the algebraic-tropical correspondence for stable maps of rational curves with marked points to toric varieties such that the marked points are mapped to given orbits of given subtori in the big torus and in the boundary divisor, the map has prescribed tangency to the boundary divisor, and certain quadruples of marked points have prescribed cross-ratios. In particular, our results extend the results of Nishinou-Siebert \cite{NS06}.

%First, in Section~\ref{sec:prob}, we set the natation, remind Mikhalkin's tropical analog of the cross-ratio invariant, and describe the objectives of the paper. Then, in Section~\ref{sec:trop},
We begin by proving that the canonical tropicalization procedure of \cite{Tyo12} associates to a stable map satisfying the constraints a rational parameterized tropical curve satisfying the tropicalization of the constraints.

Our first main result is the {\em Realization} theorem (Theorem~\ref{thm:realization}). Under certain regularity assumptions we prove that any parameterized tropical curve satisfying the tropicalization of the algebraic constraints belongs to the image of the tropicalization map. We shall emphasize, that we do not assume the tropical curve to be three-valent and our proof works over {\em any} complete discretely valued field, including the case of arbitrary small or mixed characteristic.

Our second main result is the {\em Correspondence} theorem (Theorem~\ref{thm:correspondence}). Assuming that the tropicalization of the constraints is tropically general, the characteristic of the residue field is big enough, and the problem is enumerative, we prove that the number of algebraic curves satisfying given constraints is equal to the number of tropical curves satisfying the tropicalization of the constraints and counted with explicit multiplicities. We also explain (Remark~\ref{rem:smallchar}) that if the characteristic of the residue field is arbitrary then the Correspondence theorem is still valid, but the algebraic curves should be counted with multiplicities too.

The proofs are surprisingly short, elementary, and involve no deformation theory, log-geometry, stacks, or rigid analytic spaces. Similarly to \cite{Ran15}, we do not use the degeneration of the target, but unlike the other proofs we use only the standard scheme theory, and if the characteristic of the residue field is big enough we even propose a reformulation in terms of elementary commutative algebra (see Subsection~\ref{subsec:algproof}).

Roughly speaking the proof of the Realization theorem consists of the following steps: First, we introduce convenient coordinates and describe the moduli space of the constrained stable maps that tropicalize to a given parameterized tropical curve as a fiber of an explicitly constructed map $\Theta$. This way the set of algebraic curves we are interested in becomes the set of integral points in a given fiber of $\Theta$. On the tropical side, we construct a linear map $\theta$ that controls the deformation theory of the parameterized tropical curve. Finally, we show that the map $\Theta$ is a deformation of the map of algebraic tori associated to $\theta$, which allows us to control the fibers of $\Theta$, and to deduce the result. The Correspondence theorem then follows from the Realization theorem, using an easy combinatorial lemma (Lemma~\ref{lem:trcurgenconstr}).

We assume that the reader is familiar with the theory of schemes, basic toric geometry, commutative algebra, and knows the definition of parameterized tropical curves. However, we remind the necessary notions from tropical geometry, in particular the canonical tropicalization procedure of \cite{Tyo12}, in Section~\ref{sec:trop}. We made a lot of effort to introduce intuitive and self-explaining notation, and summarized most of it in Subsection~\ref{sec:convnotset}. We shall especially emphasize \S\ref{subsubsec:rootedtree} which is used intensively in the definition of the coordinates we are working with. For the convenience of the reader we include an index of notation as was suggested by the anonymous referee.

To conclude, let us mention that the main difficulty with extending the approach of the current paper to the case of curves of higher genus is the lack of nice global coordinates similar to those introduced in the current paper. Nevertheless, working with local (Berkovich) analytic coordinates, it should be possible to prove various Realization theorems for higher genera. In particular, we plan to discuss the extension of our approach to the case of genus one curves with moduli constraints in a forthcoming paper.

\subsection*{Acknowledgements} This research was initiated while the author was visiting the Centre Interfacultaire Bernoulli in 2014 in the framework of the special program on {\em Tropical geometry in its complex and symplectic aspects}. I am very grateful to the organizers Grigory Mikhalkin and Ilia Itenberg for inviting me, and to CIB for its hospitality and fantastic research atmosphere. I would also like to thank Eugenii Shustin, Hannah Markwig, and Michael Temkin for helpful discussions.

\tableofcontents

\section{Preliminaries}\label{sec:prob}

\subsection{Conventions and Notation}\label{sec:convnotset}
\subsubsection{Multi-index} Finite collections of objects with given index set are denoted with bold letters, e.g., $\bO=(O_1,\dotsc,O_s)$, $\b1=(1,\dotsc,1)$, and $\balpha=(\alpha_\gamma)$, $\gamma\in E$.\index{$\balpha,\blambda,\bO,\bq,$ etc.}
\subsubsection{Valuations} We fix a complete discretely valued field $F$\index{$F$} having algebraically closed residue field $k$\index{$k$} and group of values $\ZZ$, and its algebraic closure $\oF$\index{$\oF$}. The valuation on $\oF$ extending the valuation on $F$ is denoted by $\val\: \oF\to \QQ\cup\{\infty\}$\index{$\val$}. Such valuation exists and unique by \cite[Corollary~2, p.425]{Bou}.
For a finite intermediate extension $F\subseteq K\subseteq \oF$, we denote by $K^o$\index{$K^{o}$} and $K^{oo}$\index{$K^{oo}$} its ring of integers and maximal ideal respectively, and by $\pi$\index{$\pi$} a uniformizer of $K^o$. The ramification index $\val(\pi)^{-1}$ is denoted by $\fe_K$\index{$\fe_K$}. To simplify the notation, the ring of integers of $\oF$ is denoted by $R$\index{$R$}.

If $X$ is a Noetherian integral scheme then any reduced codimension-one subscheme $Y\subset X$ whose generic point is regular in $X$ defines a discrete valuation on the field of rational functions $K(X)$ - {\em the order of vanishing along $Y$}. %\cite[Theorem~11.5 and its Corollary]{MatCRT}
We denote the latter valuation by $\ord_Y$.\index{$\ord_Y$}

\subsubsection{Toric geometry}
In this paper $M$ and $N$\index{$M$}\index{$N$} denote a pair of dual lattices of finite rank, and $T_N:=\Spec (\ZZ[M])$\index{$T_N$} the corresponding torus. For a ring $A$ we identify the $A$-points $\chi\in T_N(A)$ and the homomorphisms $\chi\:M\to A^\times$. We denote the base change of $T_N\to\Spec(\ZZ)$ to $\Spec(A)$ by $T_{N,A}:=\Spec(A[M])$\index{$T_{N,A}$}. For a sublattice $L\subseteq N$ we denote the annihilator of $L$ in $M$ by $L^0$\index{$L^0$}. For an abelian group $G$, we denote $N_G:=N\otimes_\ZZ G$\index{$N_G$} and $M_G:=M\otimes_\ZZ G$\index{$M_G$}, e.g., $(\ZZ^n)_\RR=\RR^n$.% is the $\RR$-vector space corresponding to $N$.

\subsubsection{Graphs}
In this paper all graphs are finite. The set of vertices of a graph $\Gamma$ is denoted by $V(\Gamma)$\index{$V(\Gamma)$}, and the set of edges by $E(\Gamma)$\index{$E(\Gamma)$}. The {\em valency} (or {\em degree}) of a vertex $w$ is denoted by $\deg(w)$\index{$\deg(w)$}. In metric trees, the geodesic path connecting vertices $w$ and $w'$ is denoted by $[w,w']$\index{$[w,w']$}, and the length of an edge $\gamma$ by $|\gamma|$\index{\textbar$\gamma$\textbar}.

\subsubsection{Tropical curves}

We follow the conventions of \cite{Tyo12}. In particular, the infinite vertices of tropical curves are totally ordered. We use notation $V^f(\Gamma)$\index{$V^f(\Gamma)$} (resp. $V^\infty(\Gamma)$\index{$V^\infty(\Gamma)$}) for the set of finite (resp. infinite) vertices, and $E^b(\Gamma)$\index{$E^b(\Gamma)$} (resp. $E^\infty(\Gamma)$\index{$E^\infty(\Gamma)$}) for the set of bounded (resp. unbounded) edges of $\Gamma$.

In this paper, the infinite vertices are {\em always} denoted by $u_1,\dotsc, u_r$\index{$u_i$}, the corresponding unbounded edges (or ends) by $e_1,\dotsc,e_r$\index{$e_i$}, and the finite vertices attached to the unbounded edges by $v_1,\dotsc, v_r$\index{$v_i$}. Notice that while $u_1,\dotsc, u_r$ are distinct, $v_1,\dotsc, v_r$ need not (and usually will not) be distinct, see Figure~\ref{fig:order}. To avoid confusion with the set of ends denoted by $\be$, we use letter $\gamma$ when referring to edges in general. Similarly, we use letter $w$ when referring to vertices in general.

We work exclusively with {\em $N_\QQ$-parameterized $\QQ$-tropical curves}, i.e., parameterized tropical curves whose bounded edges have rational lengths, and whose finite vertices are mapped to rational points of $N_\RR$. Since $N_\RR$ is the open tropical torus, we remove the infinite vertices from the tropical curves mapped to it without warning. We say that a $N_\QQ$-parameterized $\QQ$-tropical curve $h\:(\Gamma;\be)\to N_\RR$ is {\em defined over a field $K$} if $|\gamma|\in \val(K^\times)$ for all $\gamma\in E^b(\Gamma)$ and $\fe_Kh(w)\in N$ for all $w\in V^f(\Gamma)$.

\begin{figure}
\definecolor{qqqqff}{rgb}{0.,0.,1.}
\definecolor{qqzzqq}{rgb}{0.,0.6,0.}
\definecolor{ffqqqq}{rgb}{1.,0.,0.}
\begin{tikzpicture}[line cap=round,line join=round,>=triangle 45,x=1.0cm,y=1.0cm]
\clip(-1.32,-2.94) rectangle (3.24,3.38);
\draw (1.,-2.7)-- (1.,-1.);
\draw (1.,-1.)-- (-1.1,0.68);
\draw (1.,-1.)-- (2.86,-0.02);
\draw (1.,-1.)-- (1.6,0.74);
\draw (1.6,0.74)-- (2.9,2.32);
\draw (1.6,0.74)-- (0.28,1.8);
\draw (0.28,1.8)-- (1.64,3.02);
\draw (0.28,1.8)-- (-0.82,3.);
\draw (-1.45,1.16) node[anchor=north west] {$u_1$};
\draw (-1.3,3.46) node[anchor=north west] {$u_2$};
\draw (1.5,3.46) node[anchor=north west] {$u_3$};
\draw (2.7,2.8) node[anchor=north west] {$u_4$};
\draw (2.7,0.45) node[anchor=north west] {$u_5$};
\draw (1,-2.5) node[anchor=north west] {$u_6$};
\draw (1,-0.9) node[anchor=north west] {$v_1=v_5=v_6$};
\draw (1.6,0.9) node[anchor=north west] {$v_4$};
\draw (-1.1,2) node[anchor=north west] {$v_2=v_3$};
\draw (0.8,0.2) node[anchor=north west] {$\gamma_1$};
\draw (0.9,1.6) node[anchor=north west] {$\gamma_2$};
\draw (-0.45,2.98) node[anchor=north west] {$e_2$};
\draw (0.64,3.) node[anchor=north west] {$e_3$};
\draw (1.96,2.22) node[anchor=north west] {$e_4$};
\draw (1.9,0.1) node[anchor=north west] {$e_5$};
\draw (-0.6,0.56) node[anchor=north west] {$e_1$};
\draw (0.45,-1.9) node[anchor=north west] {$e_6$};
\begin{scriptsize}
\draw [fill=ffqqqq] (1.,-2.7) circle (1.5pt);
\draw [fill=qqqqff] (1.,-1.) circle (1.5pt);
\draw [fill=qqzzqq] (-1.1,0.68) circle (1.5pt);
\draw [fill=qqzzqq] (2.86,-0.02) circle (1.5pt);
\draw [fill=qqqqff] (1.6,0.74) circle (1.5pt);
\draw [fill=qqzzqq] (2.9,2.32) circle (1.5pt);
\draw [fill=qqqqff] (0.28,1.8) circle (1.5pt);
\draw [fill=qqzzqq] (1.64,3.02) circle (1.5pt);
\draw [fill=qqzzqq] (-0.82,3.) circle (1.5pt);
\end{scriptsize}
\end{tikzpicture}
\caption{A rational tropical curve and its rooted tree structure.}
\label{fig:order}
\end{figure}
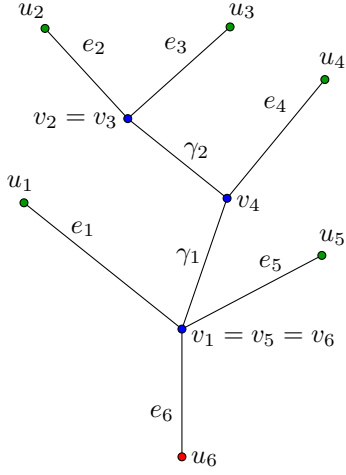

\subsubsection{The rooted tree structure and the induced combinatorics}\label{subsubsec:rootedtree}
Let $\Gamma$ be a rational tropical curve, and $u_1,\dotsc,u_r$ its infinite vertices. We declare $u_r$ to be the root, and orient the edges away from the root. The corresponding partial order on the tree $\Gamma$ for which the root is minimal, is denoted by $\preceq$\index{$\preceq$}. The tail and the head functions are denoted by $\ft\:E(\Gamma)\to V(\Gamma)$ and $\fh\:E(\Gamma)\to V(\Gamma)$ respectively.

For $w\in V^f(\Gamma)$, we set $E_w^+:=\{\gamma\,|\, \ft(\gamma)=w\}$\index{$E_w^+$} and $I_w^\infty:=\{i\,|\,u_i\succ w\}$\index{$I_w^\infty$}. Then $\{E_w^+\}_{w\in V^f(\Gamma)}$ is a partition of $E(\Gamma)\setminus\{e_r\}$, and $|E_w^+|=\deg(w)-1$. We also define a function $\iota\:E(\Gamma)\setminus\{e_r\}\to \{1,\dotsc, r-1\}$\index{$\iota$} by setting $\iota(\gamma)$ to be the minimal index $i$ for which $u_i\succeq \fh(\gamma)$. Then $\iota\:E_w^+\to \{1,\dotsc,r-1\}$ is injective for any $w\in V^f(\Gamma)$, hence induces a total order on each $E_w^+$.

We call an edge $\gamma\in E(\Gamma)$ {\em essential} if it belongs to some $E_w^+$ and is neither maximal nor minimal in it, and {\em inessential} otherwise; e.g., if $\deg(w)=3$ for all $w\in V^f(\Gamma)$ then all edges are inessential. The set of essential edges is denoted by $E^{es}(\Gamma)$\index{$E^{es}(\Gamma)$}. Finally, we set $I_w:=\iota(E_w^+)\subseteq I_w^\infty$\index{$I_w$}. Then $I_{\ft(\gamma)}\cap I_{\fh(\gamma)}=\{\iota(\gamma)\}$ and $\iota(\gamma)\in I_{\fh(\gamma)}$ is minimal for any $\gamma\in E^b(\Gamma)$.

\begin{exmp}\label{ex:rtstrandcomb}
To illustrate the definitions consider the curve on Figure~\ref{fig:order}. Then,
\begin{itemize}
\item the root is $u_6$ and the edges are oriented upwards;
\item $v_2\succeq v_4$, $u_4\succeq v_4$, but $v_2$ and $u_4$ are incomparable;
\item $\ft(\gamma_1)=v_1$ and $\fh(\gamma_1)=v_4$;
\item $E_{v_1}^+=\{e_1,e_5,\gamma_1\}$, $E_{v_2}^+=\{e_2,e_3\}$, and $E_{v_4}^+=\{e_4,\gamma_2\}$.%, and hence we see that the collection of subsets $E_{v_1}^+, E_{v_2}^+, E_{v_4}^+\subseteq E(\Gamma)\setminus\{e_6\}$ is a partition;
\item $I_{v_1}^\infty=\{1,2,3,4,5\}$ and $I_{v_4}^\infty=\{2,3,4\}$;
\item $\iota(\gamma_1)=\iota(\gamma_2)=2$ and $\iota(e_i)=i$ for all $i\le 5$;
\item the edge $\gamma_1$ is essential, and $\gamma_2$ is not;
\item finally, $I_{v_1}=\{1,2,5\}$, $I_{v_4}=\{2,4\}$.%, and indeed $I_{\ft(\gamma)}\cap I_{\fh(\gamma)}=\{2\}=\{\iota(\gamma)\}$.
\end{itemize}
\end{exmp}

\subsection{The cross-ratio}
\subsubsection{The algebraic cross-ratio}
Recall that the classical {\em cross-ratio} (or {\em double ratio}) of four distinct points $p_1,\dotsc, p_4\in\PP^1$ is defined by the formula
$$\index{$\lambda(\PP^1; p_1,\dotsc,p_4)$}
\lambda(\PP^1; p_1,\dotsc,p_4)=\frac{(p_3-p_1)(p_4-p_2)}{(p_4-p_1)(p_3-p_2)}.
$$
In particular, if $(p_2,p_3,p_4)=(1,0,\infty)$ then $p_1=\lambda(\PP^1; p_1,\dotsc,p_4)$. It is well-known that $\lambda$ is a coordinate on the moduli space of smooth connected rational curves with four marked points $\lambda\:\CM_{0,4}\xrightarrow{\sim}\PP^1\setminus \{0,1,\infty\}$.

\subsubsection{The tropical cross-ratio}
Let $(\Gamma; \be)$ be a rational tropical curve. In \cite{Mik07}, Mikhalkin defined the {\em tropical double ratio} of the two pairs $\{e_{i1}, e_{i2}\}$ and $\{e_{i3}, e_{i4}\}$ (or the {\em tropical cross ratio} of $\Gamma$ with respect to $e_{i1},\dotsc,e_{i4}$) to be the signed length of the intersection of the oriented geodesic paths joining $e_{i1}$ to $e_{i2}$ and $e_{i3}$ to $e_{i4}$, where the sign is positive if and only if the orientations are compatible. Plainly, the tropical cross-ratio is stable under tropical modifications/contractions.

We say that $\gamma\in E^b(\Gamma)$ {\em separates} $e_t, e_j$ from $e_d, e_l$ if and only if $e_t, e_j$ belong to one of the two connected components of $\Gamma\setminus\{\gamma\}$, and $e_d, e_l$ to another. Then the tropical cross-ratio of $\Gamma$ with respect to $e_{i1},\dotsc,e_{i4}$ is given by
$$\index{$\lambda^{\tr}(\Gamma; e_1,\dotsc,e_4)$}
\lambda^{\tr}(\Gamma; e_{i1},\dotsc,e_{i4}):=\sum_{\gamma\in E^b(\Gamma)}\epsilon(\gamma,i)|\gamma|;
$$
where
\begin{equation}\label{eq:epsilonei}\index{$\epsilon(\gamma,i)$}
\epsilon(\gamma,i)=\left\{
                  \begin{array}{ll}
                    1, & \gamma\; \hbox{separates the ends}\; e_{i1}, e_{i3}\; \hbox{from}\; e_{i2}, e_{i4}, \\
                    -1, & \gamma\; \hbox{separates the ends}\; e_{i1}, e_{i4}\; \hbox{from}\; e_{i2}, e_{i3}, \\
                    0 & \hbox{otherwise;}
                  \end{array}
                \right.
\end{equation}

\begin{figure}
\definecolor{qqqqff}{rgb}{0.,0.,1.}
\begin{tikzpicture}[line cap=round,line join=round,>=triangle 45,x=1.0cm,y=1.0cm]
\clip(-7.792847284651015,0.9139003520163966) rectangle (1.3560490131484155,4.42900261380248);
\draw (-7.38,3.74)-- (-6.4,2.58);
\draw (-3.42,3.06)-- (-0.44,2.58);
\draw (-6.4,2.58)-- (-7.34,1.66);
\draw (-7.5,2.4) node[anchor=north west] {$e_1$};
\draw (-7.2,3.9) node[anchor=north west] {$e_5$};
\draw (0.5,2.3) node[anchor=north west] {$e_3$};
\draw (-3.2,4.1) node[anchor=north west] {$e_4$};
\draw (-0.1,3.9) node[anchor=north west] {$e_2$};
\draw (-6.4,2.58)-- (-3.42,3.06);
\draw (-5.3,3.4) node[anchor=north west] {$\gamma_1$};
\draw (-3.42,3.06)-- (-3.44,4.04);
\draw (-1.9,3.4) node[anchor=north west] {$\gamma_2$};
\draw (-0.44,2.58)-- (0.72,3.64);
\draw (-0.44,2.58)-- (0.74,1.64);
\begin{scriptsize}
%\draw [fill=qqqqff] (-7.38,3.74) circle (2.5pt);
\draw [fill=qqqqff] (-6.4,2.58) circle (1.5pt);
\draw [fill=qqqqff] (-3.42,3.06) circle (1.5pt);
\draw [fill=qqqqff] (-0.44,2.58) circle (1.5pt);
%\draw [fill=qqqqff] (-7.34,1.66) circle (2.5pt);
%\draw [fill=qqqqff] (-3.44,4.04) circle (2.5pt);
%\draw [fill=qqqqff] (0.72,3.64) circle (2.5pt);
%\draw [fill=qqqqff] (0.74,1.64) circle (2.5pt);
\end{scriptsize}
\end{tikzpicture}
\caption{A stable rational tropical curve with 5 marked ends.}
\label{fig:picTrCR}
\end{figure}
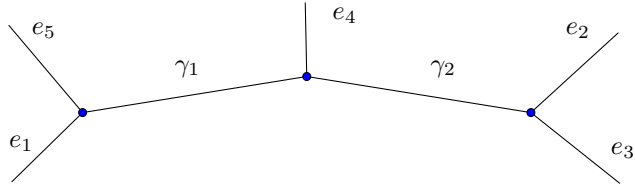

\begin{exmp}
Consider the rational tropical curve on Figure \ref{fig:picTrCR}. The common part of the geodesic paths joining $e_1$ to $e_2$ and $e_3$ to $e_4$ has length $|\gamma_2|$, but the orientations are opposite. Hence $\lambda^{\tr}(\Gamma;e_1,e_2,e_3,e_4)=-|\gamma_2|$. Alternatively, $\gamma_2$ separates $e_1,e_4$ from $e_2,e_3$, and $\gamma_1$ separates $e_1$ from $e_2,e_3,e_4$, which also leads to the conclusion that $\lambda^{\tr}(\Gamma;e_1,e_2,e_3,e_4)=0\cdot|\gamma_1|+(-1)\cdot|\gamma_2|=-|\gamma_2|.$ In a similar way one checks that $\lambda^{\tr}(\Gamma;e_1,e_4,e_2,e_3)=0$ and $\lambda^{\tr}(\Gamma;e_1,e_2,e_5,e_3)=|\gamma_1|+|\gamma_2|$.
\end{exmp}

\subsection{The objectives of the paper}
\subsubsection{The setting}
We fix once and for all
\begin{itemize}
  \item a pair of dual lattices $N$ and $M$;
  \item $n_1,\dotsc,n_r\in N$\index{$n_i$} such that\footnote{We do not assume that $n_i$ are primitive or non-zero.} $\sum_{i=1}^r n_i=0$;
  \item sublattices $n_i\in L_i\subseteq N$,\index{$L_i$} $1\le i\le r$, such that $N/L_i$ are torsion-free;
  \item $\oF$-points $\zeta_i\in (T_N/T_{L_i})(\oF)$\index{$\zeta_i$} for all $1\le i\le r$;
  \item $\oF$-point $\blambda=(\lambda_1,\dotsc, \lambda_s)\in T_{\ZZ}^s(\oF)$\index{$\lambda_i$};
  \item $s\times 4$ matrix $J$\index{$J$} with integral entries $1\le J_{ij}\le r$.
\end{itemize}
\begin{notation}
By abuse of notation, we write $\square_{i1},\dotsc, \square_{i4}$\index{$q_{i1},\dotsc, q_{i4}$; $e_{i1},\dotsc, e_{i4}$, etc.} when addressing a quadruple of objects parameterized by the $i$-th row of matrix $J$; e.g, $q_{i1},\dotsc, q_{i4}$ or $e_{i1},\dotsc, e_{i4}$.
\end{notation}
Let $\Sigma\subset N_\RR$\index{$\Sigma$} be the fan generated by the rays $\rho_i:=\Span_{\RR_+}(n_i)$ for $1\le i\le r$. We denote by $X:=X_\Sigma$\index{$X$} the corresponding toric variety, and by $O_i$\index{$O_i$} the closure in $X$ of the $T_{L_i}$-orbit corresponding to $\zeta_i$. Then $O_i\cap X_{\rho_i}$ is given by $x^m=\zeta_i(m)$, $m\in L_i^0$, since $n_i\in L_i$, where $X_{\rho_i}$ is the open subscheme corresponding to the ray $\rho_i$. Finally, we set $\blambda^{\tr}:=\val(\blambda)\in \QQ^s$\index{$\lambda_i^{\tr}$} and $O^{\tr}_i:=\{m\mapsto\val(x^m(p))\,|\,p\in (T_N\cap O_i)(\oF)\}\subseteq N_\QQ$\index{$O_i^{\tr}$}. Notice that $O_i^{\tr}$ is the preimage of $\zeta_i^\tr:=\val(\zeta_i)\in N_\QQ/(L_i)_\QQ$\index{$\zeta_i^\tr$}.

\subsubsection{The objectives}
In this paper we compare between:

%\vspace{2mm}
%{\em On the algebraic side:} t
\noindent {\em The stable morphisms $f\:(C;\bq)\to X,$ where $(C;\bq)$ is a smooth irreducible rational curve with $r$ marked points such that}

%\vspace{2mm}
\begin{center}
\begin{tabular}{|c|c|}
  \hline
  % after \\: \hline or \cline{col1-col2} \cline{col3-col4} ...
  {\bf Degree and tangency profile:} & $div(f^*x^m)=\sum (n_i,m)q_i$, \\
  \hline
  {\bf Toric constraint:} & $f(q_i)\in O_i$ for all $i\le r$, \\
  \hline
  {\bf Cross-ratio constraint:} & $\lambda(C;q_{i1},q_{i2},q_{i3},q_{i4})=\lambda_i$ for all $i\le s$; \\
  \hline
\end{tabular}
\end{center}

\noindent and {\em the stable rational $N_\QQ$-parameterized $\QQ$-tropical curves $h\:(\Gamma;\be)\to N_\RR$ with $r$ unbounded ends for which}

%\vspace{2mm}
\begin{center}
\begin{tabular}{|c|c|}
  \hline
  % after \\: \hline or \cline{col1-col2} \cline{col3-col4} ...
  {\bf Degree and multiplicity profile:} & $h(u_{i})=n_i$ for all $i\le r$, \\
  \hline
  {\bf Affine constraint:} & $h(v_i)\in O_i^{\tr}$ for all $i\le r$, \\
  \hline
  {\bf Tropical cross-ratio constraint:} & $\lambda^{\tr}(\Gamma;e_{i1},e_{i2},e_{i3},e_{i4})=\lambda_i^{\tr}$ for all $i\le s$ \\
  \hline
\end{tabular}
\end{center}
%\vspace{2mm}

\begin{notation}\index{$\CW$}\index{$\CW^{\tr}$}
The sets of such morphisms and tropical curves are denoted by $\CW$ and $\CW^{\tr}$ respectively.
\end{notation}

The goal of the paper is to construct a natural map $\Tr\:\CW\to\CW^{\tr}$\index{$\Tr$} and to study its fibers. Under certain regularity conditions we prove that the fibers are non-empty, and count the number of points in each fiber.

\subsection{A toy example}\label{subsec:toy}
Let us consider the case of rational curves of class $(1,1)$ in $\PP^1\times\PP^1$ passing through two general points and having prescribed cross-ratio with respect to the four points in the complement of the big torus.

The setting is as follows: $M=N=\ZZ^2$, $r=6$, $n_1=\binom{1}{0}$, $n_2=-\binom{1}{0}$, $n_3=\binom{0}{1}$, $n_4=-\binom{0}{1}$, $n_5=n_6=0$, $L_i=N$ for $1\le i\le 4$ and $L_i=\{0\}$ for $i=5,6$, $\zeta_i=1$ for $1\le i\le 4$ and $\zeta_i\in (\overline{F}^\times)^2$ for $i=5,6$, $s=1$, $J=(1\;2\;3\;4)$, $\lambda_1=\lambda$. Thus, $X$ is the toric surface obtained by removing the four zero-dimensional orbits from $\PP^1\times\PP^1$, $O_i=X$ for $1\le i\le 4$ and $O_i=\zeta_i$ for $i=5,6$. We shall also assume that $\zeta_5,\zeta_6$, and $\lambda$ are general in an appropriate sense.

The space $\CW$ thus consists of the stable maps $f\:(C;q_1,\dotsc,q_6)\to \PP^1\times\PP^1$ for which $q_1=f^*(\{0\}\times\PP^1)$, $q_2=f^*(\{\infty\}\times\PP^1)$, $q_3=f^*(\PP^1\times\{0\})$, $q_4=f^*(\PP^1\times\{\infty\})$, $\lambda(C;q_1,\dotsc,q_4)=\lambda$, and $f(q_i)=O_i\in (\overline{F}^\times)^2$ for $i=5,6$.

It is easy to describe $\CW$ explicitly. Indeed, pick the coordinate $t$ on $C$ such that $t(q_1)=\lambda, t(q_2)=1, t(q_3)=0, t(q_4)=\infty$. Since $f$ is of class $(1,1)$ and $q_1,\dotsc,q_4$ are the pullbacks of the boundary divisor, $f$ is given by an equation of the form $t\mapsto \left(c_1\frac{t-\lambda}{t-1}, c_2t\right)$, for some $c_1,c_2\in \overline{F}$. Thus, the affine constraint is given by $\left(c_1\frac{t(q_i)-\lambda}{t(q_i)-1}, c_2t(q_i)\right)=O_i$ for $i=5,6$. After eliminating $t(q_5)$ and $t(q_6)$ one obtains one linear and one quadratic equation in $c_1,c_2$. Thus, for a general choice of $\lambda,O_5,O_6$, the space $\CW$ consists of two $\overline{F}$-points, and hence there exist precisely two rational curves of class $(1,1)$ in $\PP^1\times\PP^1$ satisfying the constraints.

Let us now describe the set $\CW^{\tr}$: it consists of the parameterized rational tropical curves with six unbounded ends $e_1,\dotsc e_6$, such that the slopes of $e_1,\dotsc, e_4$ are $\binom{1}{0}, -\binom{1}{0}, \binom{0}{1}, -\binom{0}{1}$ respectively, the ends $e_i$ are contracted to the points $\val(O_i)\in \RR^2$ for $i=5,6$, and $\lambda^{\tr}(\Gamma;e_1,e_2,e_3,e_4)=\val(\lambda)$. If $\val(\lambda)>0$ and $\val(O_5),\val(O_6)\in\RR^2$ are tropically general, e.g. $\val(O_5)=(0,0),\val(O_6)=(11,3), \val(\lambda)=5$, then one can check that there exist precisely two parameterized rational tropical curves in the set $\CW^{\tr}$ as in Figure~\ref{fig:picTrCRsEx}. It follows from the Correspondence theorem (Theorem~\ref{thm:correspondence}), that the map $\Tr\:\CW\to\CW^{\tr}$ is bijective in this particular example.

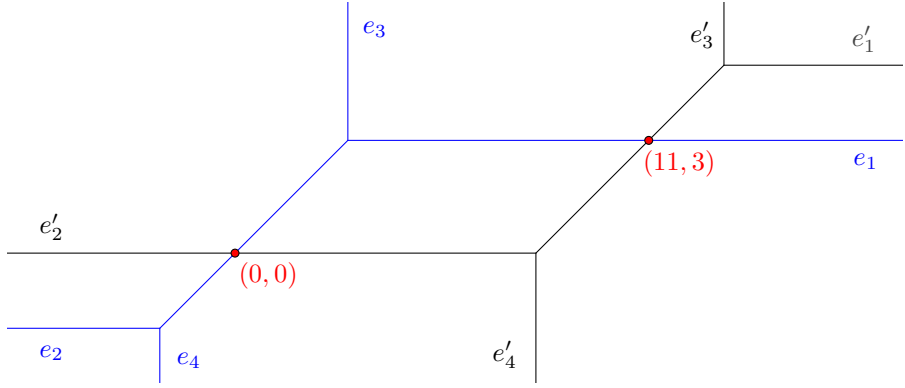
\begin{figure}
\definecolor{qqqqff}{rgb}{0.,0.,1.}
\definecolor{uuuuuu}{rgb}{0.26666666666666666,0.26666666666666666,0.26666666666666666}
\definecolor{ffqqqq}{rgb}{1.,0.,0.}
\definecolor{qqccqq}{rgb}{0.,0.8,0.}
\definecolor{zzttqq}{rgb}{0.6,0.2,0.}
\begin{tikzpicture}[line cap=round,line join=round,>=triangle 45,x=1.0cm,y=1.0cm]
\clip(-24.525,6.78) rectangle (-12.625,11.83);
%\fill[color=zzttqq,fill=zzttqq,fill opacity=0.1] (-27.,15.) -- (-27.,4.) -- (-11.,4.) -- (-11.,15.) -- cycle;
\draw [color=zzttqq] (-27.,15.)-- (-27.,4.);
\draw [color=zzttqq] (-27.,4.)-- (-11.,4.);
\draw [color=zzttqq] (-11.,4.)-- (-11.,15.);
\draw [color=zzttqq] (-11.,15.)-- (-27.,15.);
\draw [color=ffqqqq](-21.575,8.5) node[anchor=north west] {$(0,0)$};
\draw [color=ffqqqq](-16.2,10) node[anchor=north west] {$(11,3)$};
\draw [color=uuuuuu](-13.425,11.65) node[anchor=north west] {$e'_1$};
\draw (-24.225,9.15) node[anchor=north west] {$e'_2$};
\draw (-15.575,11.7) node[anchor=north west] {$e'_3$};
\draw (-18.2,7.45) node[anchor=north west] {$e'_4$};
\draw [color=qqqqff](-13.4,9.95) node[anchor=north west] {$e_1$};
\draw [color=qqqqff](-24.225,7.4) node[anchor=north west] {$e_2$};
\draw [color=qqqqff](-19.925,11.7) node[anchor=north west] {$e_3$};
\draw [color=qqqqff](-22.4,7.3) node[anchor=north west] {$e_4$};
\draw [color=qqqqff] (-26.425,7.5)-- (-22.5,7.5);
\draw [color=qqqqff] (-22.5,7.5)-- (-22.5,3.955);
\draw [color=qqqqff] (-22.5,7.5)-- (-20.,10.);
\draw [color=qqqqff] (-20,10)-- (-11,10);
\draw [color=qqqqff] (-20,10)-- (-20,15);
\draw (-27,8.5)-- (-17.5,8.5);
\draw (-17.5,8.5)-- (-17.5,4);
\draw (-17.5,8.5)-- (-15,11);
\draw (-15,11)-- (-11,11);
\draw (-15,11)-- (-15,15);
\begin{scriptsize}
\draw [fill=ffqqqq] (-21.5,8.5) circle (1.5pt);
\draw [fill=ffqqqq] (-16,10) circle (1.5pt);
\end{scriptsize}
\end{tikzpicture}
\caption{The toy example: the two curves in $\CW^{\tr}$.}
\label{fig:picTrCRsEx}
\end{figure}

\section{Tropicalization}\label{sec:trop}
We employ the canonical tropicalization of \cite{Tyo12} to construct the tropicalization map $\Tr\:\CW\to\CW^{\tr}$: Let $f\:(C;\bq)\to X$ be an element of $\CW$. Fix a complete discretely valued field of definition $K$ of $(C;\bq)$, and a uniformizer $\pi$. Since $(C;\bq)$ is rational, its stable model is defined over the ring of integers $K^o\subset K$. We define the underlying graph of the tropicalization $\Gamma$ to be the dual graph of the stable reduction of $(C;\bq)$. The vertices corresponding to the components of $C$ are called {\em finite}, and those that correspond to the marked points are {\em infinite}. The edges corresponding to nodes of the reduction are called {\em bounded}, and the edges corresponding to marked points are called {\em unbounded} (or {\em ends}). The length of an end is set to be $\infty$, and the length of the edge corresponding to a node $p$ of the reduction is defined to be $\val(\eta)$ if the total space of the stable model is given by $xy=\eta\in K^o$ in a (\'etale) neighborhood of $p$ (Figure~\ref{fig:dualgr}). The function $h$ is defined as follows: If a finite vertex $w$ corresponds to a component $C_w$ of the reduction then $h(w)(m):=\val(\pi)\cdot \ord_{C_w}(f^*(x^m))=\fe_K^{-1}\ord_{C_w}(f^*(x^m))$. If $u_i$ corresponds to the marked point $q_i$ then $h(u_i)(m):=\ord_{q_i}(f^*(x^m))$. The parametrization $h$ maps any finite vertex $w$ to $h(w)$, all bounded edges to the straight intervals joining the images of the attached vertices, and the ends $e_i$ to the rays $h(v_i)+\QQ_+h(u_i)$.

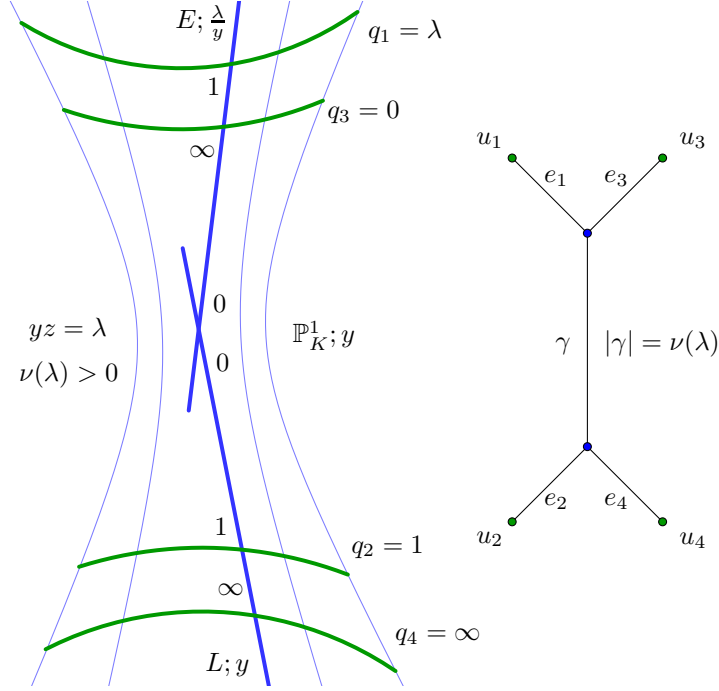
\begin{figure}
\definecolor{qqqqff}{rgb}{0.,0.,1.}
\definecolor{qqzzqq}{rgb}{0.,0.6,0.}
\definecolor{ttttff}{rgb}{0.2,0.2,1.}
\definecolor{xdxdff}{rgb}{0.49019607843137253,0.49019607843137253,1.}
\begin{tikzpicture}[line cap=round,line join=round,>=triangle 45,x=1.0cm,y=1.0cm]
\clip(0.02,-4.38) rectangle (9.8,4.72);
\draw [samples=50,domain=-0.99:0.99,rotate around={2.0771747651626784:(2.87,0.31)},xshift=2.87cm,yshift=0.31cm,color=xdxdff] plot ({0.853570104668427*(1+(\x)^2)/(1-(\x)^2)},{1.732402400257037*2*(\x)/(1-(\x)^2)});
\draw [samples=50,domain=-0.99:0.99,rotate around={2.0771747651626784:(2.87,0.31)},xshift=2.87cm,yshift=0.31cm,color=xdxdff] plot ({0.853570104668427*(-1-(\x)^2)/(1-(\x)^2)},{1.732402400257037*(-2)*(\x)/(1-(\x)^2)});
\draw [samples=50,domain=-0.99:0.99,rotate around={2.0771747651628667:(2.87,0.31)},xshift=2.87cm,yshift=0.31cm,color=xdxdff] plot ({0.5208046044953478*(1+(\x)^2)/(1-(\x)^2)},{1.859721098427524*2*(\x)/(1-(\x)^2)});
\draw [samples=50,domain=-0.99:0.99,rotate around={2.0771747651628667:(2.87,0.31)},xshift=2.87cm,yshift=0.31cm,color=xdxdff] plot ({0.5208046044953478*(-1-(\x)^2)/(1-(\x)^2)},{1.859721098427524*(-2)*(\x)/(1-(\x)^2)});
\draw [line width=1.5pt,color=ttttff,domain=2.7:9.800000000000015] plot(\x,{(-9.9828--3.58*\x)/0.44});
\draw [line width=1.5pt,color=ttttff,domain=2.62:9.800000000000015] plot(\x,{(--10.3352-3.56*\x)/0.7});
\draw [shift={(2.6,7.88)},line width=1.5pt,color=qqzzqq]  plot[domain=4.385372750972432:5.11115795334557,variable=\t]({1.*4.857593351218934*cos(\t r)+0.*4.857593351218934*sin(\t r)},{0.*4.857593351218934*cos(\t r)+1.*4.857593351218934*sin(\t r)});
\draw [shift={(2.6,7.88)},line width=1.5pt,color=qqzzqq]  plot[domain=4.159361824213505:5.330571452889091,variable=\t]({1.*4.045518284909221*cos(\t r)+0.*4.045518284909221*sin(\t r)},{0.*4.045518284909221*cos(\t r)+1.*4.045518284909221*sin(\t r)});
\draw [shift={(2.88,-7.96)},line width=1.5pt,color=qqzzqq]  plot[domain=1.205303649385764:1.878360327205055,variable=\t]({1.*5.415594819204725*cos(\t r)+0.*5.415594819204725*sin(\t r)},{0.*5.415594819204725*cos(\t r)+1.*5.415594819204725*sin(\t r)});
\draw [shift={(2.88,-7.96)},line width=1.5pt,color=qqzzqq]  plot[domain=0.9730834663547887:2.0443025384927265,variable=\t]({1.*4.5666229953795865*cos(\t r)+0.*4.5666229953795865*sin(\t r)},{0.*4.5666229953795865*cos(\t r)+1.*4.5666229953795865*sin(\t r)});
\draw (8,1.64)-- (8,-1.2);
\draw (8,1.64)-- (7,2.64);
\draw (8,1.64)-- (9,2.64);
\draw (8,-1.2)-- (7,-2.2);
\draw (8,-1.2)-- (9,-2.2);
\draw (4.96,4.6) node[anchor=north west] {$q_1=\lambda$};
\draw (4.42,3.52) node[anchor=north west] {$q_3=0$};
\draw (4.74,-2.26) node[anchor=north west] {$q_2=1$};
\draw (5.32,-3.5) node[anchor=north west] {$q_4=\infty$};
\draw (3.96,0.6) node[anchor=north west] {${\mathbb P}^1_K; y$};
\draw (2.82,3.83) node[anchor=north west] {$1$};
\draw (2.9,0.94) node[anchor=north west] {$0$};
\draw (2.58,2.93) node[anchor=north west] {$\infty$};
\draw (2.94,0.16) node[anchor=north west] {$0$};
\draw (2.92,-2.02) node[anchor=north west] {$1$};
\draw (2.96,-2.9) node[anchor=north west] {$\infty$};
\draw (2.8,-3.82) node[anchor=north west] {$L;y$};
\draw (2.4,4.8) node[anchor=north west] {$E;\frac{\lambda}{y}$};
\draw (6.4,3.1) node[anchor=north west] {$u_1$};
\draw (6.4,-2.2) node[anchor=north west] {$u_2$};
\draw (9.1,3.1) node[anchor=north west] {$u_3$};
\draw (9.1,-2.22) node[anchor=north west] {$u_4$};
\draw (8.1,0.48) node[anchor=north west] {$|\gamma|=\nu(\lambda)$};
\draw (0.45,0.64) node[anchor=north west] {$yz=\lambda$};
\draw (0.32,0.06) node[anchor=north west] {$\nu(\lambda)>0$};
%\draw (6.6,1.7) node[anchor=north west] {$v_1=v_3$};
%\draw (6.6,-0.8) node[anchor=north west] {$v_2=v_4$};
\draw (7.45,0.38) node[anchor=north west] {$\gamma$};
\draw (7.3,2.6) node[anchor=north west] {$e_1$};
\draw (7.3,-1.7) node[anchor=north west] {$e_2$};
\draw (8.1,2.6) node[anchor=north west] {$e_3$};
\draw (8.1,-1.7) node[anchor=north west] {$e_4$};
\begin{scriptsize}
\draw [fill=qqqqff] (8,1.64) circle (1.5pt);
\draw [fill=qqqqff] (8,-1.2) circle (1.5pt);
\draw [fill=qqzzqq] (7,2.64) circle (1.5pt);
\draw [fill=qqzzqq] (9,2.64) circle (1.5pt);
\draw [fill=qqzzqq] (7,-2.2) circle (1.5pt);
\draw [fill=qqzzqq] (9,-2.2) circle (1.5pt);
\end{scriptsize}
\end{tikzpicture}
\caption{The stable model of a rational curve with four marked points for which $\nu(\lambda(C;q_1,\dotsc,q_4))>0$ and its dual graph.}
\label{fig:dualgr}
\end{figure}

The curve $h\:(\Gamma;\be)\to N_\RR$ is a rational $N_\QQ$-parameterized $\QQ$-tropical curve by \cite[Lemma~2.23]{Tyo12}. Plainly, the degree and the multiplicity profile constraints are satisfied by the construction. To see that the affine constraint is satisfied, recall that $O_i\cap X_{\rho_i}$ is given by the equations $x^m=\zeta_i(m)$ for all $m\in L_i^0$, and $(n_i,m)=0$ for all $m\in L_i^0$ since $n_i\in L_i$. Thus, $h(v_i)(m)=\ord_{C_{v_i}}(f^*(x^m))=\ord_{q_i}(f^*(x^m))=\zeta_i(m)$ for all $m\in L_i^0$. Hence $h(v_i)\in O_i^{\tr}$. Finally, the tropical cross-ratio constraint is satisfied by the following:

\begin{lem}
Let $(C; q_1,\dotsc,q_4)$ be a smooth rational curve with four marked points over the field $K$, and $(\Gamma; e_1,\dotsc,e_4)$ its tropicalization. Then $\lambda^{\tr}=\val(\lambda)$.
\end{lem}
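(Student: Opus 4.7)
The plan is to reduce to a standard normalization and then do a brief case analysis on the stable reduction of $(C;q_1,\dotsc,q_4)$ driven by the position of $\lambda\in K$ with respect to $\{0,1,\infty\}$ in the reduction. First, pick the coordinate $y$ on $C\cong\PP^1_K$ with $(q_2,q_3,q_4)=(1,0,\infty)$, so that $q_1=\lambda$. The cases are indexed by which pair of marked points (if any) collides in the reduction, and in each case one writes down the stable model explicitly, extracts the dual graph with its edge lengths via the recipe $|\gamma|=\val(\eta)$ when the node is given by $xy=\eta$, and substitutes into the formula $\lambda^{\tr}=\sum_{\gamma\in E^b(\Gamma)}\epsilon(\gamma,1)|\gamma|$.

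More concretely, there are four sub-cases. If $\bar\lambda\in k\setminus\{0,1\}$, the four sections of $\PP^1_{K^o}$ stay distinct in the special fiber, so $\Gamma$ has a single finite vertex and $E^b(\Gamma)=\emptyset$; then $\lambda^{\tr}=0=\val(\lambda)$. If $\val(\lambda)>0$, the sections $q_1$ and $q_3$ collide at $0$ in the special fiber, and the stable model is obtained exactly as in Figure~\ref{fig:dualgr}: after blowing up, the node is cut out by $yz=\lambda$, producing a unique bounded edge $\gamma$ of length $\val(\lambda)$ that separates $\{e_1,e_3\}$ from $\{e_2,e_4\}$; hence $\epsilon(\gamma,1)=+1$ and $\lambda^{\tr}=\val(\lambda)$. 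If $\val(\lambda)<0$, by the symmetry $y\mapsto 1/y$ the sections $q_1$ and $q_4$ collide, the bounded edge has length $\val(\lambda^{-1})=-\val(\lambda)$, and it separates $\{e_1,e_4\}$ from $\{e_2,e_3\}$; so $\epsilon(\gamma,1)=-1$ and again $\lambda^{\tr}=(-1)\cdot(-\val(\lambda))=\val(\lambda)$. Finally, if $\bar\lambda=1$, then $\val(\lambda)=0$, $q_1$ collides with $q_2$, the unique bounded edge separates $\{e_1,e_2\}$ from $\{e_3,e_4\}$, so $\epsilon(\gamma,1)=0$, and once more $\lambda^{\tr}=0=\val(\lambda)$.

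The only potentially delicate point is the identification $|\gamma|=\val(\lambda)$ (or $\val(\lambda^{-1})$) in the two non-trivial cases, but this is immediate from the local computation of the blow-up: in the $\val(\lambda)>0$ case, the substitution $y=\lambda z^{-1}$ exhibits the total space in a neighbourhood of the node as $\Spec K^o[y,z]/(yz-\lambda)$, which by the conventions recalled in Section~\ref{sec:trop} forces $|\gamma|=\val(\lambda)$. Since the four cases exhaust all possibilities for $\bar\lambda\in\PP^1(k)$, the equality $\lambda^{\tr}=\val(\lambda)$ holds in general.
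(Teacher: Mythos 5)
Your proposal is correct and follows essentially the same route as the paper's proof: the same normalization $(q_1,q_2,q_3,q_4)=(\lambda,1,0,\infty)$, the same exhaustive case analysis on the reduction of $\lambda$ in $\PP^1(k)$, and the same explicit blow-ups producing the node $yz=\lambda$ (resp.\ its analogues) with edge length read off from $\val$. No gaps.
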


\begin{proof}
Choose the coordinate $y\:C\xrightarrow{\sim}\PP^1$ such that $(q_1, q_2, q_3, q_4)=(\lambda, 1, 0, \infty),$
and consider the trivial model $\PP^1_{K^o}$ over the ring of integers $K^o\subset K$. Assume, first, that the reduction of $q_1$ is different from $0$, $1$, and $\infty$. Then $\val(\lambda)=\val(1-\lambda)=0$, and $\PP^1_{K^o}$ is the stable model of $(C; q_1,\dotsc,q_4)$. Hence $\lambda^{\tr}=0=\val(\lambda)$.

Case 1: $\val(\lambda)>0$. To construct the stable model, consider the blow up $B$ of $\PP^1_{K^o}$ with respect to the ideal generated by $\lambda$ and $y$. Then the reduction of $B$ consists of the strict transform $L$ of $\PP^1_k$ and of the exceptional divisor $E\simeq\PP^1_k$. The two components intersect at one point, $q_1$ and $q_3$ specialize to distinct points of $E$, $q_2$ and $q_4$ to distinct points of $L$, and none of them to $E\cap L$. Hence $B$ is the stable model of $(C; q_1,\dotsc,q_4)$, see Figure~\ref{fig:dualgr}. Furthermore, the stable model is given by $yz=\lambda$ near the node of the reduction, and hence the length of the unique bounded edge of $\Gamma$ is $\val(\lambda)$. Thus, $\lambda^{\tr}=\val(\lambda)$.

Case 2: $\val(1-\lambda)>0$. In this case the stable model is the blow up of $\PP^1_{K^o}$ with respect to the ideal $(1-\lambda, 1-y)$. Moreover, $q_1$ and $q_2$ specialize to distinct points of the exceptional divisor, and $q_3$ and $q_4$ to distinct points of the strict transform of $\PP^1_k$. The length of the unique bounded edge of $\Gamma$ is $\val(1-\lambda)$. Thus, $\lambda^{\tr}=0=\val(\lambda)$.

Case 3: $\val(\lambda)<0$. This time the stable model is the blow up of $\PP^1_{K^o}$ with respect to $(\lambda^{-1}, 1/y)$. Furthermore, $q_1$ and $q_4$ specialize to distinct points of the exceptional divisor, and $q_2$ and $q_3$ to distinct points of the strict transform of $\PP^1_k$. The length of the unique bounded edge of $\Gamma$ is $-\val(\lambda)$. Thus, $\lambda^{\tr}=\val(\lambda)$.
\end{proof}

\section{Realization}
In this section we show that the image of the tropicalization map contains all {\em regular} curves, and describe the fiber of the tropicalization map over such curves. Let us start by reminding the notion of regularity.

Let $h\:(\Gamma;\be)\to N_\RR$ be an element of $\CW^{\tr}$. For $\gamma\in E^b(\Gamma)$ (resp. $\gamma\in E^\infty(\Gamma)$) set $n_\gamma:=\frac{h(\fh(\gamma))-h(\ft(\gamma))}{|\gamma|}$ (resp. $n_\gamma:=h(u)$, where $u$ is the infinite vertex attached to $\gamma$)\index{$n_\gamma$}. Then $n_\gamma\in N$ by the definition of parameterized tropical curves, and its integral length is called the {\em multiplicity} of $\gamma$. Caution! In \cite{Tyo12}, $n_\gamma$ denotes the primitive integral vector in the same direction. Consider now the following complex

\begin{equation}\label{eq:defreg}\index{$\theta$}
\theta\:\bigoplus_{w\in V^f(\Gamma)}N\oplus\bigoplus_{\gamma\in E^b(\Gamma)}\ZZ\to \bigoplus_{\gamma\in E^b(\Gamma)}N\oplus\bigoplus_{i=1}^r(N/L_i)\oplus\bigoplus_{i=1}^s\ZZ
\end{equation}
given by
$$1_\gamma\mapsto n_\gamma+\sum_{i=1}^s\epsilon(\gamma,i)\;\;\;\hbox{and}\;\;\; a_w\mapsto \sum_{\gamma}\epsilon(\gamma,w)a_w+\sum_{i=1}^r\delta(w,v_i)(a_w+L_i),$$
where $\epsilon(\gamma,i)$ is given by \eqref{eq:epsilonei}, and
$$\index{$\epsilon(\gamma,w)$}\index{$\delta(w,v_i)$}
\begin{array}{cc}
  \epsilon(\gamma,w)=\left\{
                  \begin{array}{ll}
                    1, & w=\ft(\gamma), \\
                    -1, & w=\fh(\gamma), \\
                    0 & \hbox{otherwise;}
                  \end{array}
                \right. &
  \delta(w,v_i)=\left\{
                  \begin{array}{ll}
                    1, & w=v_i, \\
                    0 & \hbox{otherwise;}
                  \end{array}
                \right.
\end{array}
$$

For an abelian group $G$, let $\theta_G$\index{$\theta_G$} be the map in the complex \eqref{eq:defreg}$\otimes_\ZZ G$. We denote
$$\CE^1_G(\Gamma, h; \bO^{\tr}, \blambda^{\tr}):=\Ker(\theta_G)\;\; \hbox{and}\;\; \CE^2_G(\Gamma, h; \bO^{\tr}, \blambda^{\tr}):={\rm Coker}(\theta_G).$$\index{$\CE^i_G(\Gamma, h; \bO^{\tr}, \blambda^{\tr})$}
If $G=\ZZ$ we omit $G$ in the notation of $\CE^\bullet_G$.
\begin{defin}[{cf. \cite[Definitions 2.45, 2.55]{Tyo12}}]
We say that $(\Gamma,h;\bO^{\tr},\blambda^{\tr})$ is {\em $G$-regular} if $\CE^2_G(\Gamma, h; \bO^{\tr}, \blambda^{\tr})=0$, and {\em $G$-superabundant} otherwise.
\end{defin}

\begin{rem}\label{rem:propE}
(i) Plainly, $G$-regularity is independent of the orientation of $\Gamma$.

(ii) By the structure theorem of finitely generated abelian groups, $\QQ$-regularity is equivalent to $\CE^2(\Gamma,h;\bO^{\tr},\blambda^{\tr})$ being a torsion group, since the latter is finitely generated, and tensor product preserves cokernels. Similarly, $k$-regularity is equivalent to $\CE^2(\Gamma,h;\bO^{\tr},\blambda^{\tr})$ being a torsion group of order prime to the characteristic of $k$. Hence $(\Gamma,h;\bO^{\tr},\blambda^{\tr})$ is $k$-regular if and only if it is $\QQ$-regular and the order of $\CE^2(\Gamma,h;\bO^{\tr},\blambda^{\tr})$  is not divisible by the characteristic of $k$.

(iii) If $(\Gamma,h;\bO^{\tr},\blambda^{\tr})$ is $\QQ$-regular then $\CE^2_{k^\times}(\Gamma,h;\bO^{\tr},\blambda^{\tr})=0$ since $k$ is algebraically closed. Moreover, the morphism of algebraic tori over $\Spec(k)$ corresponding to \eqref{eq:defreg} is surjective, the set of $k$-points of its kernel is $\CE^1_{k^\times}(\Gamma,h;\bO^{\tr},\blambda^{\tr})$, and the kernel is reduced if and only if the order of $\CE^2(\Gamma,h;\bO^{\tr},\blambda^{\tr})$ is not divisible by the characteristic. To see the latter, choose bases in \eqref{eq:defreg} such that the map is given by a matrix of the form $(0|D)$, where $D={\rm diag}(d_i)$ is diagonal. Then $\det(D)\ne 0$ by $\QQ$-regularity, $\CE^1_{k^\times}(\Gamma,h;\bO^{\tr},\blambda^{\tr})\simeq (k^\times)^\fr\times \prod_{i}\Ker(k^\times\xrightarrow{\;\square^{d_i}} k^\times)$, and the kernel is scheme-theoretically isomorphic to $T_{\ZZ,k}^\fr\times \prod_{i}\mu_{d_i,k}$, where $\fr$ is the rank of $\CE^1(\Gamma,h;\bO^{\tr},\blambda^{\tr})$, and $\mu_{d_i,k}\lhd T_{\ZZ,k}$ are the groups of roots of unity of orders $d_i$.

(iv) If $(\Gamma,h;\bO^{\tr},\blambda^{\tr})$ is $\QQ$-regular and $\CE^1(\Gamma,h;\bO^{\tr},\blambda^{\tr})=0$ then, using the notation above, $\fr=0$ and the scheme-theoretic length of the kernel $\prod_{i}\mu_{d_i,k}$ of the morphism of algebraic tori over $\Spec(k)$ corresponding to \eqref{eq:defreg} is equal to $\prod_{i}d_i=|\oplus_{i}\ZZ/d_i\ZZ|=|\CE^2(\Gamma,h;\bO^{\tr},\blambda^{\tr})|$. Further, by (ii) above, $(\Gamma,h;\bO^{\tr},\blambda^{\tr})$ is $k$-regular if and only if the characteristic of $k$ does not divide $\prod_i d_i$, or, equivalently $|\CE^1_{k^\times}(\Gamma,h;\bO^{\tr},\blambda^{\tr})|=\prod_{i}d_i=|\CE^2(\Gamma,h;\bO^{\tr},\blambda^{\tr})|$.
\end{rem}

\begin{thm}[Realization]\label{thm:realization}
Let $h\:(\Gamma;\be)\to N_\RR$ be an element of $\CW^{\tr}$, and $K\subset\oF$ a complete discretely valued subfield of definition of $\bO$, $\blambda$, and $(\Gamma, h)$. Assume that $\lambda_i^{\tr}\ne 0$ for all $i$, and $(\Gamma,h;\bO^{\tr},\blambda^{\tr})$ is $\QQ$-regular. Then

(1) $h\:(\Gamma;\be)\to N_\RR$ belongs to the image of $\Tr\:\CW\to\CW^{\tr}$.

(2) If $(\Gamma,h;\bO^{\tr},\blambda^{\tr})$ is $k$-regular, $\Gamma$ is three-valent, and $\CE^1(\Gamma, h; \bO^{\tr}, \blambda^{\tr})=0$ then the fiber of the tropicalization map $\Tr$ over
$h\:(\Gamma;\be)\to N_\RR$ consists of exactly $|\CE^2(\Gamma, h; \bO^{\tr}, \blambda^{\tr})|$ morphisms $f\:(C;\bq)\to X$, and all morphisms in the fiber are defined over $K$.
\end{thm}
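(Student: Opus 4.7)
The plan is to reduce the theorem to a Hensel-type lifting statement for a morphism $\Theta$ of algebraic tori over $R$ whose reduction modulo the maximal ideal coincides with the morphism of $k$-tori attached to the linear map $\theta$ of \eqref{eq:defreg}. First, I would exploit the rooted-tree structure to rigidify the picture: on any candidate $f\in\Tr^{-1}(h)$, each component $C_w$ of the special fibre of the stable model of $(C;\bq)$ corresponds to $w\in V^f(\Gamma)$, and the ordering of the incidences at $w$ induced by $\iota$ singles out three distinguished points on $C_w$ which I can send to $0,1,\infty$. This fixes a coordinate $t_w$ on $C_w\simeq\PP^1$; the positions of the remaining incidences (extra free cross-ratios, indexed essentially by $E^{es}$), the node parameters $\eta_\gamma\in\oF^\times$ with $\val(\eta_\gamma)=|\gamma|$, and the toric constants $\chi_w\in T_N(\oF)$ rescaling the otherwise rigid restriction $f|_{C_w}$ then give a complete parameter set for candidate stable maps tropicalizing to $h$.

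Next I would build $\Theta$ as the map from this parameter space to the space of constraints, with components: (a) for each $\gamma\in E^b(\Gamma)$, the node-matching condition, landing in $T_N$; (b) for each $i\le r$, the class of $f(q_i)$ in $T_N/T_{L_i}$, to be equated with $\zeta_i$; (c) for each $i\le s$, the cross-ratio $\lambda(C;q_{i1},\dots,q_{i4})$, to be equated with $\lambda_i$. A direct calculation shows that, at the level of $\val$, this is exactly the integer-linear map $\theta$. The separator signs $\epsilon(\gamma,i)$ of \eqref{eq:epsilonei} arise from a telescoping expansion of the cross-ratio across the tree: only edges separating $\{e_{i1},e_{i3}\}$ from $\{e_{i2},e_{i4}\}$ leave surviving $\eta_\gamma$-factors, with sign dictated by orientation. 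The hypothesis $\lambda_i^{\tr}\ne 0$ guarantees that each cross-ratio coordinate is genuinely non-trivial, so that direction of the target torus is actually cut out.

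After translating all parameters and the target point by appropriate fixed powers of $\pi$, so that each lands in $R^\times$, $\Theta$ becomes a morphism of $R$-tori whose reduction modulo $\pi$ is precisely the $k$-torus morphism $\theta_{k^\times}$ from Remark~\ref{rem:propE}(iii). Under $\QQ$-regularity that reduction is surjective (hence smooth), so Hensel's lemma lifts the target point to an $R$-point of the source, proving (1). For (2), three-valence together with $\CE^1=0$ forces the source and target of $\theta$ to have the same rank with $\theta$ injective, so the reduced morphism of tori is a surjective isogeny; by Remark~\ref{rem:propE}(iv), $k$-regularity upgrades this isogeny to an étale one, with kernel of length $|\CE^2|$. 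Each of these $|\CE^2|$ geometric points in the reduced fibre lifts uniquely through Hensel to a point over $R$, and since the étale lifts are unique they are automatically defined over $K$.

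The main technical obstacle will be the cross-ratio bookkeeping: showing that the valuation of $\lambda(C;q_{i1},\dots,q_{i4})$ computed in the rooted-tree coordinates equals the combinatorial $\sum_\gamma \epsilon(\gamma,i)|\gamma|$. This requires tracking how each $\eta_\gamma$ enters the expression for $t_w(q_{ij})$ as one walks from $w$ to $u_{ij}$ along the rooted tree, and verifying that only edges in the symmetric difference of the geodesic paths $[e_{i1},e_{i2}]$ and $[e_{i3},e_{i4}]$ survive the cancellation in the cross-ratio formula. Once this identification is made, the remaining steps follow essentially mechanically from Remark~\ref{rem:propE}.
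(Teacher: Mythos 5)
Your proposal captures the right architecture (rooted-tree coordinates, node parameters, toric constants $\chi_w$, a constraint map $\Theta$, and comparison with the linear map $\theta$), and the treatment of part (2) is essentially correct. But there is a genuine gap in part (1), and a smaller imprecision worth flagging.

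The gap in (1): you apply Hensel's lemma to lift an $R$-point, reasoning that surjectivity of $\theta_{k^\times}$ (from $\QQ$-regularity) yields smoothness and hence liftability. Surjectivity of a morphism of $k$-tori does \emph{not} imply smoothness: when the order of $\CE^2(\Gamma,h;\bO^{\tr},\blambda^{\tr})$ is divisible by $\chara k$, the kernel of $\theta_{k^\times}$ is non-reduced (see Remark~\ref{rem:propE}(iii)), so $\theta_{k^\times}$ is not smooth and the fiber scheme $\SSS$ over $K^o$ is not smooth. Hensel's lemma then fails to produce a $K^o$-point — and indeed, none may exist (consider $\Spec K^o[x]/(x^p-\pi)$ in residue characteristic $p$). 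What is actually available under $\QQ$-regularity alone is \emph{flatness} of $\SSS\to\Spec K^o$: the paper establishes this by observing that the reduction $\Theta_k$ factors as a product of a map out of $B_k$ (depending only on $\bbeta$) and the homomorphism $\theta_{k^\times}$, so the fibers of $\Theta_k$ are $\GG$-torsors over $B_k$ of constant dimension, and flatness follows from \cite[Theorem 23.1]{MatCRT}. One then produces an $R$-point not by Hensel but by Mumford's existence theorem for quasi-sections (or the explicit Hauptidealsatz argument in the paper), which provides a point over a possibly \emph{ramified} finite extension of $K^o$ — exactly what an $R$-point is. Your Hensel argument only proves (1) under the additional hypothesis of $k$-regularity, which is strictly weaker than what the theorem asserts.

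A secondary imprecision: $\Theta$ is \emph{not} a morphism of $R$-tori, even after rescaling by powers of $\pi$. Its source is $T_{N,K^o}^{V^f(\Gamma)}\times T_{\ZZ,K^o}^{E^b(\Gamma)}\times B$ where $B$ is only an open subscheme of a torus, and the coefficient functions $\phi_\gamma,\varphi_j,\psi_i$ are genuinely rational (not monomial) in $(\balpha,\bbeta)$. The sharp technical point — the one that makes everything work — is that their reductions modulo $\pi$ are \emph{independent of} $\balpha$ (a consequence of Proposition~\ref{prop:coord}), so that $\Theta_k$ is a translate of $\theta_{k^\times}$ over each point of $B_k$. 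Saying ``the reduction of $\Theta$ is $\theta_{k^\times}$'' blurs this key observation; spelling it out is what yields the torsor structure of the fibers and hence flatness.
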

\begin{rem}
It is sufficient to prove the theorem under the assumption that $\lambda_i^{\tr}>0$ for all $i$. Indeed, the general case reduces to this by replacing $\lambda_i$ with $\lambda_i^{-1}$ if $\lambda_i^{\tr}<0$, and switching the entries $(3,4)$ in the corresponding rows of matrix $J$. Thus, from now on {\em we will always assume that $\lambda_i^{\tr}>0$ for all $i$.}
\end{rem}

\subsection{The space of rational curves with given tropicalization}

In this subsection we introduce explicit coordinates on the space of rational curves with $r$ marked points tropicalizing to a given stable rational tropical curve with marked ends $(\Gamma;\be)$.

Let $(C;\bq)$ be a smooth rational curve with marked points tropicalizing to $(\Gamma;\be)$. For each finite vertex $w\in V^f(\Gamma)$, denote by $y_w$\index{$y_w$} the coordinate on $C$ such that
\begin{equation}\label{eq:defeqcoor}
y_w(q_r)=\infty, y_w(q_a)=0, y_w(q_b)=1,
\end{equation}
where $a$ and $b$ are the minimal and the maximal indices in $I_w$ respectively. Notice that $|I_w|=|E^+_w|=\deg(w)-1\ge 2$ since the curve $(\Gamma;\be)$ is stable, and hence \eqref{eq:defeqcoor} makes sense.

Let $\gamma\in E^b(\Gamma)$ be a bounded edge. If $a$ and $b$ are as above then $a=\iota(\gamma)$ and the coordinate $\frac{y_{\ft(\gamma)}-y_{\ft(\gamma)}(q_a)}{y_{\ft(\gamma)}(q_b)-y_{\ft(\gamma)}(q_a)}$ satisfies \eqref{eq:defeqcoor} for the vertex $w=\fh(\gamma)$. Hence
$$y_{\ft(\gamma)}-y_{\ft(\gamma)}(q_a)=(y_{\ft(\gamma)}(q_b)-y_{\ft(\gamma)}(q_a))y_{\fh(\gamma)}.$$ Thus, $|\gamma|=\val(y_{\ft(\gamma)}(q_b)-y_{\ft(\gamma)}(q_a))$ by the very definition of $|\gamma|$, since the stable model of $C$ is given by $(y_{\ft(\gamma)}-y_{\ft(\gamma)}(q_a))\frac{1}{y_{\fh(\gamma)}}=y_{\ft(\gamma)}(q_b)-y_{\ft(\gamma)}(q_a)$ in a neighborhood of the node corresponding to $\gamma$.

Finally, we define $\balpha^C\in T_\ZZ^{E^b(\Gamma)}(R)$ and $\bbeta^C\in \AAA^{E(\Gamma)}(R)$ by setting \index{$\alpha^C_\gamma$}\index{$\beta^C_\gamma$} $$\alpha^C_\gamma:=\frac{y_{\ft(\gamma)}(q_b)-y_{\ft(\gamma)}(q_a)}{\pi^{|\gamma|\fe_K}}\;\; \hbox{and}\;\;\beta^C_\gamma:=y_{\ft(\gamma)}(q_{\iota(\gamma)});$$ and for each $\gamma\in E^b(\Gamma)$ a linear function\index{$\Psi_\gamma$} $$\Psi_\gamma(y):=\beta^C_\gamma+\pi^{|\gamma|\fe_K}\alpha^C_\gamma y.$$

Let us summarize the properties of $\balpha=\balpha^C,\bbeta=\bbeta^C,$ and $\by$:

\begin{prop}\label{prop:coord}
(1) If $\gamma,\gamma'\in E^+_w$ are distinct then $\beta_\gamma-\beta_{\gamma'}\in T_\ZZ(R)$. Furthermore, if $\gamma\in E^+_w$ is minimal (resp. maximal) then $\beta_\gamma=0$ (resp. $\beta_\gamma=1$). %In particular, if $\deg(v)=3$ then $\{\beta(e)\}_{e\in E^+_v}=\{0,1\}$.

(2) If $\gamma\in E^b(\Gamma)$ then $y_{\ft(\gamma)}=\Psi_\gamma(y_{\fh(\gamma)}).$ In particular, for any $i$, $$(y_{\ft(\gamma)}-y_{\ft(\gamma)}(q_i))=\pi^{|\gamma|\fe_K}\alpha_\gamma(y_{\fh(\gamma)}-y_{\fh(\gamma)}(q_i)).$$

(3) Let $w, w'\in V^f(\Gamma)$ be finite vertices, $w=w_1,w_2,\dotsc,w_{d+1}=w'$ the vertices along the geodesic path $[w,w']$, and $\gamma_1,\dotsc, \gamma_d$ the corresponding edges. Then
$$y_w=\Psi_{\gamma_1}^{\epsilon_1}\circ\dots\circ\Psi_{\gamma_d}^{\epsilon_d}(y_{w'}),$$
where $\epsilon_i=1$ if $\gamma_i$ is directed along the path $[w,w']$ and $\epsilon_i=-1$ otherwise\footnote{Notice that since $\Gamma$ is a tree, there exists $j$ such that $\epsilon_i=1$ if and only if $i\ge j$.}.
%$$y_u=\beta(\varepsilon_1)+\pi^{|\varepsilon_1|\fe_K}\alpha(\varepsilon_1)\left(\beta(\varepsilon_2)+\pi^{|\varepsilon_2|\fe_K}\alpha(\varepsilon_2)\left(\dots\left(\beta(\varepsilon_{j-1}k)+\pi^{|\varepsilon_{j-1}|\fe_K}\alpha(\varepsilon_{j-1})y_{u'}\right)\right)\right).$$
In particular, for any $i$, $$(y_w-y_w(q_i))=\pi^{\fe_K\sum_{i=1}^d\epsilon_i|\gamma_i|}\left(\prod_{i=1}^d\alpha_{\gamma_i}^{\epsilon_i}\right)(y_{w'}-y_{w'}(q_i)).$$

(4) Let $1\le i<r$, $w\in V^f(\Gamma)$, $w=w_1,w_2,\dotsc,w_{d+1}=v_i$ the vertices along the geodesic path $[w,v_i]$, and $\gamma_1,\dotsc, \gamma_d$ the corresponding edges. Then
$$y_w(q_i)=\Psi_{\gamma_1}^{\epsilon_1}\circ\dots\circ\Psi_{\gamma_d}^{\epsilon_d}(\beta_{e_i}),$$
where $\epsilon_i$ are as in (3). In particular, if $w=\fh(e_r)=v_r$ then
\begin{equation}\label{eq:formulaqi}
y_{v_r}(q_i)=\beta_{\gamma_1}+\pi^{|\gamma_1|\fe_K}\alpha_{\gamma_1}\left(\dots(\beta_{\gamma_d}+\pi^{|\gamma_d|\fe_K}\alpha_{\gamma_d}\beta_{e_i})\right).
\end{equation}

(5) $\val(y_w(q_i))\ge 0$ if and only if $u_i\succ w$, for all $w\in V^f(\Gamma)$ and $1\le i\le r$.

(6) $\val(y_{\ft(\gamma)}(q_i)-\beta_\gamma)=0$ for all $\gamma\in E^b(\Gamma)$ and $i\in I_{\ft(\gamma)}^\infty\setminus I_{\fh(\gamma)}^\infty$.
\end{prop}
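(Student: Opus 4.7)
The plan is to derive (1)--(4) directly from the normalization \eqref{eq:defeqcoor} and the definitions of $\alpha_\gamma,\beta_\gamma,\Psi_\gamma$, and then to establish the coupled valuation assertions (5) and (6) together by an induction along geodesic paths in $\Gamma$.

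For (1), write $\beta_\gamma=y_w(q_{\iota(\gamma)})$ for $\gamma\in E_w^+$. If $\gamma$ is $\iota$-minimal in $E_w^+$ then $\iota(\gamma)=a$, so $\beta_\gamma=y_w(q_a)=0$ by \eqref{eq:defeqcoor}; similarly the $\iota$-maximal edge gives $\beta_\gamma=1$. For distinct $\gamma,\gamma'\in E_w^+$ the marked points $q_{\iota(\gamma)}$ and $q_{\iota(\gamma')}$ lie on different branches of the stable reduction emanating from $C_w$, hence specialize to distinct smooth points of $C_w$, and therefore $\beta_\gamma-\beta_{\gamma'}\in R^\times=T_\ZZ(R)$. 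For (2), the M\"obius coordinate $\frac{y_{\ft(\gamma)}-y_{\ft(\gamma)}(q_a)}{y_{\ft(\gamma)}(q_b)-y_{\ft(\gamma)}(q_a)}$ (with $a,b$ the min and max of $I_{\fh(\gamma)}$) agrees with $y_{\fh(\gamma)}$ at the three points $q_r,q_a,q_b$ by \eqref{eq:defeqcoor}, so the two coincide; rearranging and using $a=\iota(\gamma)$ gives $y_{\ft(\gamma)}=\Psi_\gamma(y_{\fh(\gamma)})$, and the displayed difference formula follows by evaluating at $q_i$ and subtracting. Parts (3) and (4) then fall out by induction along $[w,w']$ and $[w,v_i]$ respectively, with each edge contributing $\Psi_\gamma$ or $\Psi_\gamma^{-1}$ according to its orientation relative to the path; the explicit formula \eqref{eq:formulaqi} is the special case $w=v_r$, where the path to $v_i$ descends and all $\epsilon_k=1$.

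For (5) and (6) I would proceed in three coordinated stages. First, handle (5) in the descending direction $u_i\succ w$: the path $[w,v_i]$ is descending, so (4) writes $y_w(q_i)=\Psi_{\gamma_1}\circ\dots\circ\Psi_{\gamma_d}(\beta_{e_i})$ as a composition of forward $\Psi$'s, and since (1) gives $\beta_\gamma\in R$, $\alpha_\gamma\in R^\times$, and each $|\gamma_k|>0$, a direct induction yields $y_w(q_i)\in R$. Next, deduce (6): for $i\in I_{\ft(\gamma)}^\infty\setminus I_{\fh(\gamma)}^\infty$, the path $[\ft(\gamma),v_i]$ begins with some $\gamma'\in E_{\ft(\gamma)}^+$ distinct from $\gamma$ and then descends, so the descending case just proved gives $y_{\ft(\gamma)}(q_i)=\beta_{\gamma'}+\pi^{|\gamma'|\fe_K}\alpha_{\gamma'}z$ for some $z\in R$; thus $y_{\ft(\gamma)}(q_i)-\beta_\gamma=(\beta_{\gamma'}-\beta_\gamma)+\pi^{|\gamma'|\fe_K}\alpha_{\gamma'}z$, and (1) makes the leading term a unit while the second term has strictly positive valuation. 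Finally, for the ascending direction of (5), let $w'$ be the parent of $w$ and $\gamma$ the connecting edge; induct on the distance from $w$ up to the common ancestor of $w$ and $v_i$. The relation $y_w(q_i)=(y_{w'}(q_i)-\beta_\gamma)/(\pi^{|\gamma|\fe_K}\alpha_\gamma)$ combined with (6) in the base case (where the numerator has valuation zero) and with the inductive hypothesis together with $\beta_\gamma\in R$ in the inductive step (where the numerator inherits the strictly negative valuation of $y_{w'}(q_i)$) yields $\val(y_w(q_i))<0$ in both situations.

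The main obstacle is respecting this logical ordering: (5) and (6) are genuinely intertwined, and one must first isolate the descending half of (5), then use it to prove (6), and only then invoke (6) to complete the ascending half of (5). Once the ordering is fixed, everything reduces to routine manipulations of the affine maps $\Psi_\gamma^{\pm 1}$, all resting on the single geometric observation underlying (1) --- that marked points lying in different branches of $E_w^+$ have distinct reductions on $C_w$.
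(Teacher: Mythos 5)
Your proof is correct and takes essentially the same route as the paper's, whose entire proof reads ``(1) and (2) follow from the definition of $\balpha$ and $\bbeta$, (3) follows from (2) by induction, (4) follows from (3) and the definition of $\bbeta$, (5) follows from (4), and (6) follows from (5) and (1).'' You have filled in exactly the details this one-liner suppresses. The one place where you genuinely improve on the stated logic is the treatment of (5) and (6): read literally, the paper derives (5) in full from (4) and only then (6) from (5) and (1), but the ``only if'' half of (5) needs to know that the numerator $y_{w^*}(q_i)-\beta_\gamma$ is a unit at the meet vertex $w^*$ of $w$ and $v_i$, which is precisely the key instance of (6). Your three-stage ordering --- descending half of (5), then (6), then ascending half of (5) --- resolves this cleanly; the alternative would be the geometric argument that for $i\notin I_w^\infty$ the point $q_i$ specializes to the branch of $C_w$ containing $q_r$, i.e.\ to $y_w=\infty$. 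Either way the statement stands, and your write-up is the more self-contained of the two.
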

\begin{proof}
(1) and (2) follow from the definition of $\balpha$ and $\bbeta$, (3) follows from (2) by induction, (4) follows from (3) and the definition of $\bbeta$, (5) follows from (4), and (6) follows from (5) and (1).
\end{proof}
Set $v:=v_r$, and let $a,b\in I_v$ be the minimal and the maximal indices. Then the collection $\{y_v(q_i)\}$, $1\le i<r$, $i\ne a,b$, is a system of coordinates on the moduli space of all rational curves with $r$ marked points. One can check inductively on the partially ordered sets of edges and vertices that $\balpha^C$ and $\bbeta^C$ can be expressed explicitly in terms of $\{y_v(q_i)\}$ for curves tropicalizing to $(\Gamma;\be)$. Vice versa, \eqref{eq:formulaqi} expresses $\{y_v(q_i)\}$ in terms of $\balpha^C$ and $\bbeta^C$.

Thus, $(\alpha^C_\gamma,\beta^C_{\gamma'})_{\gamma\in E^b(\Gamma)}^{\gamma'\in E^{es}(\Gamma)}$ is a system of coordinates on the space of rational curves with $r$ marked points tropicalizing to $(\Gamma;\be)$, and $C\mapsto (\balpha^C,\bbeta^C)$ is an open immersion of this space into $T_\ZZ(R)^{|E^b(\Gamma)|+ov(\Gamma)}$. Moreover, it is fairly easy to describe the image of the immersion. Indeed, by Proposition~\ref{prop:coord} (1), if $\gamma_1<\dots<\gamma_d$ are the edges in $E_w^+$, $\beta_{\gamma_1}:=0, \beta_{\gamma_d}:=1$ then $\beta_{\gamma_i}-\beta_{\gamma_j}\in T_\ZZ(R)$ for all $i\ne j$. If the latter is satisfied for all $w\in V^f(\Gamma)$ then one defines $C:=\PP^1$, $q_r:=\infty$, and $q_i$ to be given by the right-hand side of \eqref{eq:formulaqi} for each $1\le i<r$. It is now straight-forward to verify that $(C;\bq)$ tropicalizes to $(\Gamma;\be)$ and $(\balpha^C,\bbeta^C)=(\balpha,\bbeta)$. We leave the details to the reader.

\subsection{The space of morphisms %$f\:(C;\bq)\to X$
with given tropicalization}

Let us fix a parameterized stable rational tropical curve with marked ends $h\:(\Gamma;\be)\to N_\RR$. In this subsection we give an explicit description of the space of morphisms $f\:(C;\bq)\to X$ tropicalizing to it, for which $(C;\bq)$ is a smooth connected rational curve with marked points.

Fix $(C;\bq)$ as above, and let $\balpha=\balpha^C$, $\bbeta=\bbeta^C$ be its coordinates. Let $\gamma\in E^b(\Gamma)$. Define $\phi_\gamma\in T_N(R)$\index{$\phi_\gamma$} by setting
$$
\phi_\gamma(m):=\frac{\prod_{i\in I_{\ft(\gamma)}^\infty\setminus I_{\fh(\gamma)}^\infty}\left(y_{\ft(\gamma)}(q_i)-\beta_\gamma\right)^{(n_i,m)}}{\prod_{i\notin I_{\ft(\gamma)}^\infty}\left(\frac{y_{\ft(\gamma)}(q_i)}{y_{\ft(\gamma)}(q_i)-\beta_\gamma}\right)^{(n_i,m)}}.
$$
%Notice, that $\val(y_{\ft(e)}(q_i)-\beta(e))=0$ for all $i\in I_{\ft(e)}^\infty\setminus I_{\fh(e)}^\infty$, $\val(y_{\ft(e)}(q_i))<0$ for all $i\notin I_{\ft(e)}^\infty$, and $\val(\beta(e))\ge 0$.
Notice that $\phi_\gamma$ is well defined since $\phi_\gamma(m)\in R^\times$ for all $m$ by Proposition~\ref{prop:coord} (5-6). Notice also that $\phi_\gamma$ is a function of $(\balpha,\bbeta)$ by Proposition~\ref{prop:coord} (4).

Let $f\:(C;\bq)\to X$ be a morphism tropicalizing to $h\:(\Gamma;\be)\to N_\RR$. For each $m\in M$ and $w\in V^f(\Gamma)$, let us express the rational function $f^*(x^m)$ in terms of the coordinate $y_w$. Since $x^m\in K(X)^\times$, $$f^*(x^m)\in \left(\oF[y_w, (y_w-y_w(q_{r+1}))^{-1},\dotsc, (y_w-y_w(q_{r-1}))^{-1}]\right)^\times.$$ Moreover,
\begin{equation}\label{eq:fmintermsofyw}\index{$\chi_w$}
f^*(x^m)=\pi^{\fe_Kh(w)(m)}\chi_w(m)\prod_{i\in I_w^\infty}(y_w-y_w(q_i))^{(n_i,m)}\prod_{i\notin I_w^\infty}\left(\frac{y_w}{y_w(q_i)}-1\right)^{(n_i,m)}
\end{equation}
since the boundary multiplicity profile of $(C,f)$ is $\{n_i\}$. Plainly $\chi_w\in T_N(\oF)$. Moreover, $\chi_w\in T_N(R)\subset T_N(\oF)$ by the definition of $h(w)$.

\begin{lem}%\label{lem:1}
For any $\gamma\in E^b(\Gamma)$ the following holds
\begin{equation}\label{eq:charcompitability}
\phi_\gamma\cdot\frac{\chi_{\ft(\gamma)}}{\chi_{\fh(\gamma)}}\cdot\alpha_\gamma^{n_\gamma}=1.
\end{equation}
\end{lem}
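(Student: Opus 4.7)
The plan is to compare the two expressions for $f^*(x^m)$ obtained from \eqref{eq:fmintermsofyw} at the vertices $w := \ft(\gamma)$ and $w' := \fh(\gamma)$, equate them after substituting $y_w = \Psi_\gamma(y_{w'})$, and read off the claimed identity. Since $w \prec w'$, we have the inclusion $I_{w'}^\infty \subseteq I_w^\infty$, so the index set $\{1,\dots,r\}$ partitions into three pieces relative to the edge $\gamma$: the indices in $I_{w'}^\infty$ (on the $w'$-side), those in $I_w^\infty \setminus I_{w'}^\infty$ (descendants of $w$ but not of $w'$), and those in $\{1,\dots,r\} \setminus I_w^\infty$ (on the $w$-side of $\gamma$).

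First I would rewrite every factor of $F_{w'}(m) := \prod_{i\in I_{w'}^\infty}(y_{w'}-y_{w'}(q_i))^{(n_i,m)} \prod_{i\notin I_{w'}^\infty}(y_{w'}/y_{w'}(q_i) -1)^{(n_i,m)}$ in terms of $y_w$. For $i \in I_{w'}^\infty$, Proposition~\ref{prop:coord}(2) gives $y_{w'} - y_{w'}(q_i) = \pi^{-|\gamma|\fe_K}\alpha_\gamma^{-1}(y_w - y_w(q_i))$. For $i \notin I_{w'}^\infty$, inverting $\Psi_\gamma$ yields $y_{w'}(q_i) = \pi^{-|\gamma|\fe_K}\alpha_\gamma^{-1}(y_w(q_i) - \beta_\gamma)$ (using Proposition~\ref{prop:coord}(4) and (6) to see the right-hand side lies in $R$ or $R^{\times}$ as appropriate), and a short calculation converts $y_{w'}/y_{w'}(q_i)-1$ into $(y_w - y_w(q_i))/(y_w(q_i) - \beta_\gamma)$. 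Taking the ratio $F_w(m)/F_{w'}(m)$ the factors $(y_w - y_w(q_i))^{(n_i,m)}$ cancel uniformly, leaving
\[
\frac{F_w(m)}{F_{w'}(m)} = (\pi^{|\gamma|\fe_K}\alpha_\gamma)^{\sum_{i\in I_{w'}^\infty}(n_i,m)} \cdot \prod_{i\in I_w^\infty\setminus I_{w'}^\infty}(y_w(q_i)-\beta_\gamma)^{(n_i,m)} \cdot \prod_{i\notin I_w^\infty}\Bigl(\frac{y_w(q_i)-\beta_\gamma}{y_w(q_i)}\Bigr)^{(n_i,m)}.
\]

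Next I would invoke the balancing obtained from $\sum_{i=1}^r n_i = 0$: summing this identity restricted to the subtree rooted at $w'$ (equivalently, applying it to the cut through $\gamma$) gives $\sum_{i \in I_{w'}^\infty} n_i = n_\gamma$. Combined with $\fe_K(h(w') - h(w))(m) = \fe_K|\gamma|(n_\gamma, m)$, this precisely absorbs the $\pi^{\fe_K|\gamma|(n_\gamma,m)}$ term in the equation $\pi^{\fe_K h(w)(m)}\chi_w(m) F_w(m) = \pi^{\fe_K h(w')(m)}\chi_{w'}(m) F_{w'}(m)$. What remains after dividing through is
\[
\frac{\chi_{w'}(m)}{\chi_w(m)} = \alpha_\gamma^{(n_\gamma,m)} \cdot \prod_{i\in I_w^\infty\setminus I_{w'}^\infty}(y_w(q_i)-\beta_\gamma)^{(n_i,m)} \cdot \prod_{i\notin I_w^\infty}\Bigl(\frac{y_w(q_i)-\beta_\gamma}{y_w(q_i)}\Bigr)^{(n_i,m)},
\]
whose right-hand side is visibly $\alpha_\gamma^{(n_\gamma,m)}\phi_\gamma(m)$ by the definition of $\phi_\gamma$. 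Rearranging gives $\phi_\gamma \cdot (\chi_{\ft(\gamma)}/\chi_{\fh(\gamma)}) \cdot \alpha_\gamma^{n_\gamma} = 1$, as desired.

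The only real content is the bookkeeping in the second paragraph: keeping track of the three index ranges and pairing up the transformed factors of $y_{w'}(q_i)$ and $y_{w'}-y_{w'}(q_i)$ across $\gamma$. Once that is done, the tropical balancing and the definition of $\phi_\gamma$ make the final identification essentially automatic; no deformation-theoretic or analytic input is used beyond Proposition~\ref{prop:coord} and the global balancing $\sum n_i = 0$.
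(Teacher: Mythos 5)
Your proposal is correct and follows essentially the same route as the paper: equate the two expressions \eqref{eq:fmintermsofyw} at $\ft(\gamma)$ and $\fh(\gamma)$, transport all factors across $\gamma$ via Proposition~\ref{prop:coord}(2), and use the cut identity $\sum_{i\in I_{\fh(\gamma)}^\infty}n_i=n_\gamma$ (equivalently the paper's $\sum_{i\notin I_{\fh(\gamma)}^\infty}n_i=-n_\gamma$) together with $h(\fh(\gamma))-h(\ft(\gamma))=|\gamma|n_\gamma$ to identify the leftover with $\alpha_\gamma^{n_\gamma}\phi_\gamma$. The only organizational difference is that you compute the ratio $F_w/F_{w'}$ directly rather than first reducing to the paper's intermediate identity \eqref{eq:phie}; the bookkeeping is identical.
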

\begin{proof}
By \eqref{eq:fmintermsofyw} and Proposition~\ref{prop:coord} (2), the following holds for all $m\in M$:
$$
\frac{\pi^{\fe_Kh(\ft(\gamma))(m)}}{\pi^{\fe_Kh(\fh(\gamma))(m)}}\cdot\frac{\chi_{\ft(\gamma)}(m)}{\chi_{\fh(\gamma)}(m)}\cdot\frac{\prod_{i\notin I_{\fh(\gamma)}^\infty}y_{\fh(\gamma)}(q_i)^{(n_i,m)}}{\prod_{i\notin I_{\ft(\gamma)}^\infty}y_{\ft(\gamma)}(q_i)^{(n_i,m)}}\cdot\left(\pi^{|\gamma|\fe_K}\alpha_\gamma\right)^{\sum_i(n_i,m)}=1.
$$
Since $\sum_{i=1}^rn_i=0$ and $h(\fh(\gamma))-h(\ft(\gamma))=|\gamma|n_\gamma$, it remains to show that
\begin{equation}\label{eq:phie}
\phi_\gamma(m)=\frac{(\pi^{|\gamma|\fe_K}\alpha_\gamma)^{-(n_\gamma,m)}\prod_{i\notin I_{\fh(\gamma)}^\infty}y_{\fh(\gamma)}(q_i)^{(n_i,m)}}{\prod_{i\notin I_{\ft(\gamma)}^\infty}y_{\ft(\gamma)}(q_i)^{(n_i,m)}}.
\end{equation}
After summing up the balancing conditions over the finite vertices $w\nsucc \ft(\gamma)$, one obtains the following
$$
\sum_{i\notin I_{\fh(\gamma)}^\infty} n_i=-n_\gamma.
$$
Thus, the numerator of \eqref{eq:phie} is equal to $\prod_{i\notin I_{\fh(\gamma)}^\infty}\left(y_{\ft(\gamma)}(q_i)-\beta_\gamma\right)^{(n_i,m)}$ by Proposition~\ref{prop:coord} (2), which implies the lemma.
\end{proof}

Vice versa, if we are given a collection $\chi_w\in T_N(R)$ for all $w\in V^f(\Gamma)$ such that \eqref{eq:charcompitability} holds for all $\gamma\in E^b(\Gamma)$ then the maps given by \eqref{eq:fmintermsofyw} are compatible, and hence such datum defines a morphism $f\:(C;\bq)\to X$ tropicalizing to $h\:(\Gamma;\be)\to N_\RR$. We conclude that the space of such morphisms is given by equations \eqref{eq:charcompitability} in the trivial $T_N(R)^{V^f(\Gamma)}$-bundle over the space of curves $(C;\bq)$ tropicalizing to $(\Gamma;\be)$.

\subsection{The equations of the constraints}

Recall that $O_j$ is given by $x^m=\zeta_j(m)$ for all $m\in L_j^0$. Thus, the morphism $f\:(C;\bq)\to X$ satisfies the constraint $O_j$ if and only if $f^*(x^m)(q_j)=\zeta_j(m)$ for all $m\in L_j^0$.

Set $\zeta^\Gamma_j(m):=\pi^{-\fe_Kh(v_j)(m)}\zeta_j(m)$.\index{$\zeta^\Gamma_j$} Then $\zeta_j^\Gamma\in (T_N/T_{L_j})(R)$, and, by \eqref{eq:fmintermsofyw}, $f\:(C;\bq)\to X$ satisfies the constraint $O_j$ if and only if
$$\zeta^\Gamma_j(m)=\chi_{v_j}(m)\prod_{i\in I_{v_j}^\infty}(\beta_{e_j}-y_{v_j}(q_i))^{(n_i,m)}\prod_{i\notin I_{v_j}^\infty}\left(\frac{\beta_{e_j}}{y_{v_j}(q_i)}-1\right)^{(n_i,m)},$$
for all $m\in L_j^0$. Define $\varphi_j\in T_N(R)$\index{$\varphi_j$} by setting
$$
\varphi_j(m):=\prod_{i\in I_{v_j}^\infty}(\beta_{e_j}-y_{v_j}(q_i))^{(n_i,m)}\prod_{i\notin I_{v_j}^\infty}\left(\frac{\beta_{e_j}}{y_{v_j}(q_i)}-1\right)^{(n_i,m)}.
$$
Notice that $\varphi_j$ is well defined since $\varphi_j(m)\in R^\times$ for all $m$ by Proposition~\ref{prop:coord} (5-6). Notice also that $\varphi_j$ is a function of $(\balpha,\bbeta)$ by Proposition~\ref{prop:coord} (4). We conclude that $f\:(C;\bq)\to X$ satisfies the constraint $O_j$ if and only if
$$
\varphi_j\chi_{v_j}\equiv\zeta^\Gamma_j\mod T_{L_j}.
$$

Let us now reformulate the cross-ratio constraint: Pick an index $1\le i\le s$. Recall that we assumed that $\lambda_i^{\tr}>0$. Let $w_i,w'_i\in V^f(\Gamma)$ be such that $q_{i1},q_{i3}$ specialize to different points of the component $C_{w_i}$ and $q_{i2},q_{i4}$ specialize to different points of the component $C_{w'_i}$. Notice that an edge $\gamma$ belongs to the geodesic path $[w_i,w'_i]$ if and only if $\gamma$ separates $e_{i1},e_{i3}$ from $e_{i2},e_{i4}$. Let $w''_i$ be the minimal vertex along $[w_i,w'_i]$. Set $\lambda_i^{\Gamma}:=\pi^{-\fe_K\lambda_i^{\tr}}\lambda_i\in T_\ZZ(R)$\index{$\lambda_i^{\Gamma}$} and
$$
\psi_i:=\frac{(y_{w_i}(q_{i3})-y_{w_i}(q_{i1}))(y_{w'_i}(q_{i4})-y_{w'_i}(q_{i2}))}{(y_{w''_i}(q_{i4})-y_{w''_i}(q_{i1}))(y_{w''_i}(q_{i3})-y_{w''_i}(q_{i2}))}\in T_\ZZ(R).
$$\index{$\psi_i$}
Then, by Proposition~\ref{prop:coord} (3), $\lambda(C;q_{i1},\dotsc,q_{ir})=\lambda_i$ if and only if
$$
\psi_i\prod_{\gamma\subset[w_i,w'_i]}\alpha_\gamma=\lambda_i^\Gamma.
$$
As usual, $\psi_i$ can be expressed explicitly in terms of $(\balpha,\bbeta)$ by Proposition~\ref{prop:coord} (4).

\subsection{Proof of Theorem~\ref{thm:realization}}
Let $B\subset T_{\ZZ,K^o}^{E(\Gamma)}$\index{$B$} be the $K^o$-subscheme of points $\bbeta$ satisfying assertion (1) of Proposition~\ref{prop:coord}. Plainly, $B$ is flat over $K^o$ and has pure relative dimension $|E^{es}(\Gamma)|$. In particular, if $\Gamma$ is trivalent then $B\simeq\Spec(K^o)$.

Consider the $K^o$-morphism\index{$\Theta$}
$$
\Theta\:T_{N,K^o}^{V^f(\Gamma)}\times T_{\ZZ,K^o}^{E^b(\Gamma)}\times B\to T_{N,K^o}^{E^b(\Gamma)}\times\prod_{i=1}^r(T_{N,K^o}/T_{L_i,K^o})\times\prod_{i=1}^sT_{\ZZ,K^o}
$$
that maps $\left(\boldsymbol\chi,\boldsymbol\alpha,\boldsymbol\beta\right)$ to
$$\left(\left(\phi_\gamma(\balpha,\bbeta)\frac{\chi_{\ft(\gamma)}}{\chi_{\fh(\gamma)}}\alpha_\gamma^{n_\gamma}\right),\left(\varphi_j(\balpha,\bbeta)\chi_{v_j}\right),\left(\psi_i(\balpha,\bbeta)\prod_{\gamma\subset[w_i,w'_i]}\alpha_\gamma\right)\right)$$
where $w_i,w'_i\in V^f(\Gamma)$ are such that $q_{i1},q_{i3}$ specialize to different points of the component $C_{w_i}$ and $q_{i2},q_{i4}$ specialize to different points of the component $C_{w'_i}$. Let $\SSS$\index{$\SSS$} be the fiber of $\Theta$ over $(\b1,\bzeta^\Gamma,\blambda^\Gamma)$. Then $\CW=\SSS(R)$, and the ideal of $\SSS$ is generated by $l:=\dim\left(T_{N,K^o}^{E^b(\Gamma)}\times\prod_{i=1}^r(T_{N,K^o}/T_{L_i,K^o})\times\prod_{i=1}^sT_{\ZZ,K^o}\right)-\dim(K^o)$ functions. Let $f_1,\dotsc,f_l$ be such generators.
%$\rank\left(\bigoplus_{e\in E^b(\Gamma)}N\oplus\bigoplus_{i=1}^r(N/L_i)\oplus\bigoplus_{i=1}^s\ZZ\right)$

Let $\Theta_k=\Theta\times_{\Spec(K^o)}\Spec(k)$ be the reduction of $\Theta$. It follows from the definition and Proposition~\ref{prop:coord} that the reductions of $\phi_\gamma(\balpha,\bbeta)$, $\varphi_j(\balpha,\bbeta)$, and $\psi_i(\balpha,\bbeta)$ are independent of the reduction of $\balpha$. Thus, $\Theta_k$ is nothing but the product of the map
$B_k\to T_{N,k}^{E^b(\Gamma)}\times\prod_{i=1}^r(T_{N,k}/T_{L_i,k})\times\prod_{i=1}^sT_{\ZZ,k}$ given by the reduction of $(\bphi(\balpha,\bbeta), \bvarphi(\balpha,\bbeta), \bpsi(\balpha,\bbeta))$, and the homomorphism of $k$-algebraic groups $$\theta_{k^\times}\:T_{N,k}^{V^f(\Gamma)}\times T_{\ZZ,k}^{E^b(\Gamma)}\to T_{N,k}^{E^b(\Gamma)}\times\prod_{i=1}^r(T_{N,k}/T_{L_i,k})\times\prod_{i=1}^sT_{\ZZ,k}$$ corresponding to \eqref{eq:defreg}. By Remark~\ref{rem:propE} (iii), the later morphism is surjective, and the set of $k$-points of its kernel is $\CE^1_{k^\times}(\Gamma, h; \bO^{\tr}, \blambda^{\tr})$. Set $\GG:=\Ker(\theta_{k^\times})$\index{$\GG$}. Then the fibers of $\Theta_k$ are $\GG$-torsors over $B_k$. In particular, the fibers are irreducible of pure dimension $|E^{es}(\Gamma)|+\rank(\CE^1(\Gamma, h; \bO^{\tr}, \blambda^{\tr}))$. Thus, by \cite[Theorem~23.1]{MatCRT}, $\Theta$ is flat at any $k$-point, and hence so is the base change $\SSS\to \Spec(K^o)$.

(1) Since $\CW=\SSS(R)$, it is sufficient to construct a quasi-section of $\SSS\to \Spec(K^o)$, which exists by Mumford's existence theorem \cite[Proposition~14.5.10]{egaIV-III}. In our case, the construction is easy, and we include it for the convenience of the reader.

Let $p\in\SSS_k$ be a (general) closed point, and $g_1,\dotsc,g_d\in\fm_p\subset \CO_{\SSS,p}$ be such that their classes form a basis of the cotangent space at $p$ to the underlying reduced subscheme of $\SSS_k$. Then $d=\dim(\SSS_k)=|E^{es}(\Gamma)|+\rank(\CE^1(\Gamma, h; \bO^{\tr}, \blambda^{\tr}))$. Consider the reduced local subscheme $Z$ in the local scheme $\SSS_p$ defined by the functions $g_1,\dotsc,g_d$. Then by Hauptidealsatz (e.g., \cite[Theorem~13.5]{MatCRT}) the dimension of any component of $Z$ is at least $\dim\left(T_{N,K^o}^{V^f(\Gamma)}\times T_{\ZZ,K^o}^{E^b(\Gamma)}\times B\right)-l-d=1$. On the other hand, $\dim(Z_k)=0$. Thus, by Hauptidealsatz, $Z$ is equi-dimensional of dimension one, and $\pi$ is not a zero-divisor in $\CO(Z)$.

Let $Z'\to Z$ be the normalization of a component of $Z$. Then $\CO(Z')$ is integrally closed in its field of fractions $K(Z')$ and contains $K^o$. However, by \cite[Corollary~3 p.379 and Corollary~2 p.425]{Bou}, the integral closure of $K^o$ in a finite extension of $K$ is a discrete valuation ring, and hence a maximal proper subring. Thus, $\CO(Z')$ is a discrete valuation ring. Choose an embedding $K(Z')\hookrightarrow \oF$. Then $\CO(Z')=R\cap K(Z')$, and hence $Z'\to Z\subset\SSS$ defines a point in $\SSS(R)=\CW$ as needed.

(2) Since $K^o$ is complete and $k$ is algebraically closed, $\Spec(K^o)$ admits no non-trivial local \'etale coverings by \cite[Corollary~2 p.425]{Bou}. Thus, it is sufficient to show that $\SSS$ is \'etale over $K^o$ and $|\SSS_k|=|\CE^2(\Gamma, h; \bO^{\tr}, \blambda^{\tr})|$. By the assumption, $\Gamma$ is three-valent, and hence $B=\Spec(K^o)$. Furthermore $|\SSS_k|=|\CE^1_{k^\times}(\Gamma, h; \bO^{\tr}, \blambda^{\tr})|=|\CE^2(\Gamma, h; \bO^{\tr}, \blambda^{\tr})|$ by Remark~\ref{rem:propE} (iii)-(iv), since $(\Gamma,h;\bO^{\tr},\blambda^{\tr})$ is $k$-regular. Finally, the relative tangent space of $\Theta$ at each point $p\in\SSS_k$ is $\CE^1_k(\Gamma, h; \bO^{\tr}, \blambda^{\tr})=0$. Thus, $\SSS$ is \'etale over $\Spec(K^o)$, and we are done.

\begin{rem}\label{rem:smallchar}
If we omit the $k$-regularity assumption in (2) then $|\CE^2(\Gamma, h; \bO^{\tr}, \blambda^{\tr})|$ can still be interpreted as the number of algebraic curves in the fiber of $\Tr$ but counted with multiplicities. To see this, one considers the open neighborhood $\SSS'$ of $\SSS_k\subset\SSS$ that contains no irreducible components concentrated over the generic point of $\Spec(K^o)$. Then $\SSS'\to\Spec(K^o)$ is flat, and one can show that it is finite. Hence $|\CE^2(\Gamma, h; \bO^{\tr}, \blambda^{\tr})|$, which is equal to the length of $\SSS_k=\SSS'_k$ by Remark~\ref{rem:propE} (iv), is equal to the length of $\SSS'\times_{\Spec(K^o)}\Spec(K)$. The latter is precisely the number of curves in the fiber of $\Tr$ counted with the following multiplicities: the multiplicity of $f\:(C;\be)\to X$ is the length of the scheme concentrated at the class of the curve and cut out by the constraints on the moduli space of stable maps.
\end{rem}

\section{Correspondence}
In this section we prove the correspondence theorem under the assumption that the characteristic of $k$ is big enough, and $\bO^{\tr}$ and $\blambda^{\tr}$ are tropically general, by which we mean the following: If a family of objects is parameterized by a cone in a $\QQ$-affine space then an element in this family is {\em tropically general} for certain problem if it does not belong to a finite union of proper affine subspaces determined by the problem.

\begin{thm}[Correspondence]\label{thm:correspondence}
Assume that the constraints $\bO$ and $\blambda$ are such that $\bO^{\tr}$ and $\blambda^{\tr}$ are tropically general, and
\begin{equation}\label{eq:dim}
s+\sum_{i=1}^r\rank(N/L_i)=r-1.
\end{equation}
If the characteristic of $k$ is big enough (or zero) then the map $\Tr\:\CW\to\CW^{\tr}$ is surjective and the size of the fiber over $h\:(\Gamma; \be)\to N_\RR$ is $|\CE^2(\Gamma, h; \bO^{\tr}, \blambda^{\tr})|$. Moreover, all curves in the fiber are defined over any field of definition of $(\Gamma,h)$.
\end{thm}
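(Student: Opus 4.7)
The plan is to deduce Theorem~\ref{thm:correspondence} from Theorem~\ref{thm:realization} by using tropical genericity to verify the hypotheses of the latter for every $(\Gamma,h)\in\CW^{\tr}$, and then to handle the characteristic restriction by a straightforward finiteness argument. The starting point is the combinatorial Lemma~\ref{lem:trcurgenconstr} (advertised in the introduction), which under the dimension condition \eqref{eq:dim} and tropical genericity of $\bO^{\tr}$ and $\blambda^{\tr}$ should yield that $\CW^{\tr}$ is finite and that every element $h\:(\Gamma;\be)\to N_\RR$ of $\CW^{\tr}$ is three-valent, has all $\lambda_i^{\tr}\ne 0$, satisfies $\CE^1(\Gamma,h;\bO^{\tr},\blambda^{\tr})=0$, and is $\QQ$-regular. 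Intuitively, \eqref{eq:dim} makes the complex $\theta$ ``square'', so $\QQ$-regularity is equivalent to $\CE^1=0$ and corresponds to isolated solutions, while tropical genericity rules out the non-generic combinatorial coincidences (higher-valent vertices, collinearities producing positive-dimensional strata, vanishing tropical cross-ratios, etc.).

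Next I would dispatch the characteristic restriction. By Remark~\ref{rem:propE}(ii),(iv), for each $(\Gamma,h)\in\CW^{\tr}$ the group $\CE^2(\Gamma,h;\bO^{\tr},\blambda^{\tr})$ is a finite abelian group. Since $\CW^{\tr}$ is finite by the previous step, only finitely many primes divide the orders $|\CE^2(\Gamma,h;\bO^{\tr},\blambda^{\tr})|$ as $(\Gamma,h)$ varies over $\CW^{\tr}$. The clause ``characteristic of $k$ is big enough'' should be interpreted precisely as avoiding all such primes, and under this assumption Remark~\ref{rem:propE}(ii) upgrades $\QQ$-regularity to $k$-regularity for every element of $\CW^{\tr}$.

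Finally, fix a complete discretely valued subfield $K\subset\oF$ of definition of $\bO$, $\blambda$, and $(\Gamma,h)$. All hypotheses of Theorem~\ref{thm:realization}(2) are now in place: three-valency, $\QQ$- and $k$-regularity, and the vanishing $\CE^1=0$. Theorem~\ref{thm:realization}(1) gives $h\in\Tr(\CW)$, yielding surjectivity of $\Tr$; Theorem~\ref{thm:realization}(2) gives exactly $|\CE^2(\Gamma,h;\bO^{\tr},\blambda^{\tr})|$ morphisms in the fiber, all defined over $K$. The main obstacle is Lemma~\ref{lem:trcurgenconstr} itself: converting tropical genericity into the concrete statements of three-valency, $\CE^1$-vanishing, and $\QQ$-regularity. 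This is a standard but delicate dimension-counting argument, in which one varies $(\bO^{\tr},\blambda^{\tr})$ in a $\QQ$-affine space and excludes the finitely many proper affine subspaces on which non-generic combinatorial types can appear; once it is in hand, the Correspondence theorem is essentially a packaging of Theorem~\ref{thm:realization}.
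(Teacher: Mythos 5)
Your proposal is correct and takes essentially the same route as the paper: the Correspondence theorem is deduced from the Realization theorem (Theorem~\ref{thm:realization}) by invoking Lemma~\ref{lem:trcurgenconstr} to verify its hypotheses (three-valency, $k$-regularity, $\CE^1=0$, with $\lambda_i^{\tr}\ne 0$ and $\QQ$-regularity coming for free from tropical genericity and $k$-regularity respectively), and the ``big enough characteristic'' clause is handled exactly as you describe, via the finiteness of $\CW^{\tr}$ and the finitely many primes dividing $|\CE^2(\Gamma,h;\bO^{\tr},\blambda^{\tr})|$. The paper likewise states this deduction in one line and relegates the real work to the combinatorial lemma.
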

\begin{defin}
The {\em complex multiplicity} of a curve $h\:(\Gamma; \be)\to N_\RR$ satisfying the constraints $\bO^{\tr}, \blambda^{\tr}$ is defined to be $m_\CCC(\Gamma, h; \bO^{\tr}, \blambda^{\tr}):=|\CE^2(\Gamma, h; \bO^{\tr}, \blambda^{\tr})|$\index{$m_\CCC(\Gamma, h; \bO^{\tr}, \blambda^{\tr})$}.
\end{defin}
Since the result of our enumerative problem over an algebraically closed field depends only on the characteristic we obtain the following:
\begin{cor}\label{cor:comcount}
Under the assumption of the Correspondence theorem, the number of stable complex rational curves $f\:(C;\bq)\to X$ satisfying general complex constraints $\bO,\blambda$ is equal to
$$\sum_{(\Gamma,h)\in\CW^\tr}m_\CCC(\Gamma, h; \bO^{\tr}, \blambda^{\tr}).$$
\end{cor}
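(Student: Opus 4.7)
The corollary follows from Theorem~\ref{thm:correspondence} via the principle that, for a zero-dimensional enumerative problem, the count over sufficiently general constraints depends only on the characteristic of the algebraically closed ground field. I would proceed in three steps.

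First, choose $F := \CCC(\!(t)\!)$. The residue field is then $k = \CCC$, which has characteristic zero, so the residue-characteristic hypothesis of Theorem~\ref{thm:correspondence} is trivially satisfied, and $\oF$ is itself algebraically closed of characteristic zero.

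Second, given $\CCC$-general constraints $(\bO_0, \blambda_0)$, I would exhibit an $\oF$-valued lift $(\bO, \blambda)$ whose tropicalization $(\bO^{\tr}, \blambda^{\tr})$ is tropically general. The set of admissible tropicalizations forms a rational convex cone in a $\QQ$-affine space, and the condition "tropically general" excludes only a finite union of proper affine subspaces. One may therefore scale each complex parameter by a power $t^{a}$ of the uniformizer with a rational exponent vector $a$ chosen outside this finite union, producing an $\oF$-lift with the desired tropicalization. Theorem~\ref{thm:correspondence} applied to $(\bO,\blambda)$ then yields
\[
|\CW|\;=\;\sum_{(\Gamma,h)\in\CW^\tr} m_\CCC(\Gamma, h; \bO^{\tr}, \blambda^{\tr}),
\]
and in addition tells us that all maps in $\CW$ are defined over $\oF$.

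Third, I would argue that $|\CW|$ coincides with the number of complex stable maps satisfying the original constraints $(\bO_0, \blambda_0)$. The constrained moduli is cut out, inside a finite-type $\ZZ$-scheme parameterizing stable rational maps to $X$ with the prescribed degree and tangency profile, by explicit equations depending on the constraints; by \eqref{eq:dim} the expected dimension of the cut is zero. A standard transversality argument in characteristic zero shows that for $\CCC$-general constraints this cut is finite and reduced, and that the universal constrained family is finite \'etale over the parameter space in a Zariski neighborhood of $(\bO_0,\blambda_0)$. Since $(\bO,\blambda)$ reduces to $(\bO_0,\blambda_0)$, the corresponding $\oF$-point of the parameter space lies in this neighborhood, and cardinality of a finite \'etale fiber is preserved under algebraically closed base change from $\CCC$ to $\oF$. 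Hence $|\CW|$ equals the $\CCC$-count, and combining with the previous displayed formula gives the corollary.

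The main obstacle is the transversality step in the third paragraph: one must know that the complex constraint conditions on the moduli of stable rational maps to $X$ intersect transversely for general choices. This is a classical Gromov--Witten-theoretic statement in characteristic zero (it ensures in particular that the complex count is finite and unambiguous in the first place), and can be proved by a Kleiman-type argument applied to the evaluation and cross-ratio maps out of the moduli. Once this generic transversality is in hand, the corollary is purely a packaging of Theorem~\ref{thm:correspondence} together with the invariance of finite \'etale cardinality under algebraically closed base change.
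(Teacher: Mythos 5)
Your overall strategy --- take $F=\CCC(\!(t)\!)$, apply Theorem~\ref{thm:correspondence} over $\oF$, and transfer the count to $\CCC$ by the principle that a zero-dimensional enumerative count with general constraints over an algebraically closed field depends only on the characteristic --- is exactly the argument the paper has in mind (the paper dispatches the corollary with a single sentence invoking precisely this principle). However, there is a genuine misstep in your third paragraph. The lift $(\bO,\blambda)$ you build in the second paragraph is obtained by scaling the complex constraints by nontrivial powers $t^{a}$; this is forced, since a tropically general tropicalization must in particular have $\lambda_i^{\tr}\neq 0$ (and should equal the prescribed $(\bO^{\tr},\blambda^{\tr})$ if you want the right-hand side as stated, so you should in fact take $a=(\bO^{\tr},\blambda^{\tr})$). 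Consequently $(\bO,\blambda)$ does \emph{not} reduce to $(\bO_0,\blambda_0)$: its reduction modulo the maximal ideal of $R$ does not even lie in the parameter torus. So the sentence ``since $(\bO,\blambda)$ reduces to $(\bO_0,\blambda_0)$, the corresponding $\oF$-point lies in this neighborhood'' fails exactly at the point where the two counts are glued together. (The implication ``reduction in an open set $\Rightarrow$ the $\oF$-point is in it'' would be fine; it is the hypothesis that is false.)

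The gap is repairable, but an argument is needed: one must show that the fiber of the coordinatewise-valuation map over the given tropically general $(\bO^{\tr},\blambda^{\tr})$ meets the open locus $U$ of the constraint-parameter space over which the universal solution family is finite \'etale (this $U$ comes from your transversality step and is dense open in an irreducible torus, so the degree over it is constant). For instance, let $g$ be a nonzero function vanishing on the complement of $U$ and evaluate it at $t^{a}\cdot(\bO_0,\blambda_0)$: the terms of minimal $t$-adic valuation have coefficient a nonzero Laurent polynomial in $(\bO_0,\blambda_0)$, so for general complex $(\bO_0,\blambda_0)$ the value is nonzero and the lift lies in $U_{\oF}$. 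Then, $\oF$ being algebraically closed, its fiber has the same cardinality as the fiber over a general complex point, while Theorem~\ref{thm:correspondence} identifies that cardinality with $\sum_{(\Gamma,h)\in\CW^\tr}m_\CCC(\Gamma,h;\bO^{\tr},\blambda^{\tr})$. With this (or any equivalent Zariski-density argument for the tropicalization fiber) inserted in place of the reduction claim, and granting the characteristic-zero generic-transversality statement you defer to, your proof is complete and agrees in substance with the paper's.
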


The Correspondence theorem follows immediately from the Realization theorem using the following:
\begin{lem}\label{lem:trcurgenconstr}
Let $h\:(\Gamma;\be)\to N_\RR$ be an element of $\CW^{\tr}$. If the assumptions of the Correspondence theorem hold then $\Gamma$ is three-valent, $(\Gamma,h;\bO^{\tr},\blambda^{\tr})$ is $k$-regular, and $\CE^1(\Gamma,h;\bO^{\tr},\blambda^{\tr})=0$.
\end{lem}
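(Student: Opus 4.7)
My plan is to analyze the situation stratum-by-stratum through the combinatorial types of tropical curves, and then exploit the tropical genericity of the constraints to obtain the required rigidity.

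For each combinatorial type $[\Gamma']$ of a parameterized rational tropical curve with end-slopes $n_1,\dots,n_r$, the moduli $\CM^\tr_{[\Gamma']}$ is a relatively open rational polyhedron of dimension $\rank(N)+|E^b(\Gamma')|$: one chooses a position in $N_\QQ$ of any root finite vertex and then positive lengths for the bounded edges, with the slopes on all edges fixed by the type and balancing. The evaluation map
\[
\mathrm{ev}_{[\Gamma']}\colon \CM^\tr_{[\Gamma']}\longrightarrow \CP:=\bigoplus_{i=1}^r N_\QQ/L_{i,\QQ}\oplus \QQ^s,
\]
sending $h$ to its tuple of affine-constraint values and tropical cross-ratios, is affine linear, and its linear part is precisely the rationalization of the map $\theta$ from \eqref{eq:defreg}; its target has dimension $r-1$ by \eqref{eq:dim}.

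Tropical genericity enters as follows: since there are only finitely many combinatorial types $[\Gamma']$ compatible with the fixed end-data and cross-ratio incidence pattern, each produces an affine subspace $\mathrm{Im}(\mathrm{ev}_{[\Gamma']})\subseteq\CP$ together with its kernel directions, and by definition $(\bO^\tr,\blambda^\tr)$ avoids every proper affine subspace of $\CP$ arising this way. Applied to the particular $[\Gamma]$ of our $(\Gamma,h)\in\CW^\tr$, this means that $\mathrm{ev}_{[\Gamma],\QQ}$ is surjective and its fiber over $(\bO^\tr,\blambda^\tr)$ is zero-dimensional. Zero-dimensionality of the fiber is the vanishing of its tangent space $\Ker(\theta_\QQ)=\CE^1_\QQ$; since $\CE^1$ embeds as a subgroup of the free abelian group $\bigoplus_{V^f}N\oplus\bigoplus_{E^b}\ZZ$, it is torsion-free, so $\CE^1=0$. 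Surjectivity together with $\dim\CP=r-1$ yields $\rank(N)+|E^b(\Gamma)|=r-1$, and since for a tree with $r$ unbounded ends one has $|E^b|\le r-3$ with equality iff the tree is trivalent, $\Gamma$ must be three-valent.

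For $k$-regularity, once $\CE^1=0$ the rank identity $\rank\CE^1-\rank\CE^2=\rank(\mathrm{source})-\rank(\mathrm{target})$ shows $\CE^2$ is a finite abelian group. Since $\CW^\tr$ is itself finite (the same genericity argument, applied across all types, leaves only finitely many types hosting solutions, each with a zero-dimensional fiber) and each curve contributes a positive integer $|\CE^2|$, the orders are bounded by some integer $N_0$; the assumption that $\chara(k)$ is big enough (or zero) yields $\chara(k)\nmid|\CE^2|$ for every curve in $\CW^\tr$, so Remark~\ref{rem:propE}(ii) gives $k$-regularity.

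The main obstacle I anticipate is in the third paragraph: to extract simultaneously the surjectivity of $\mathrm{ev}_{[\Gamma],\QQ}$ and the zero-dimensionality of its fiber from the definition of tropical genericity, one must precisely catalogue the finite family of "bad" affine subspaces of $\CP$ that genericity refers to --- namely the images and the kernel-translates of the evaluation maps across all combinatorial types consistent with the fixed degree and cross-ratio data. Once this catalogue is pinned down, the rest of the argument is linear algebra and elementary counting.
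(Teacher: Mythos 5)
Your route is the paper's route: your $\mathrm{ev}_{[\Gamma]}$ is (up to the identification $\CM^{\tr}_{[\Gamma]}\subset\Ker(\vartheta_\QQ)$, where $\vartheta$ is $\theta$ followed by projection to $\bigoplus_{\gamma\in E^b(\Gamma)}N$) the paper's evaluation map $\varrho_\QQ$, genericity is used exactly to force surjectivity of $\varrho_\QQ$ over the finitely many combinatorial types, and the rest is linear algebra. But there is one genuinely inverted inference, and it sits precisely at the spot you flag as your ``main obstacle'': zero-dimensionality of the fiber of $\mathrm{ev}_{[\Gamma],\QQ}$ over $(\bO^{\tr},\blambda^{\tr})$ \emph{cannot} be extracted from tropical genericity of the constraint, no matter how carefully you catalogue bad subspaces. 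If the affine-linear map $\mathrm{ev}_{[\Gamma],\QQ}$ were surjective with positive-dimensional kernel, then \emph{every} fiber would be a translate of that kernel and hence positive-dimensional; no condition on which target point you choose can change this. Since you then use zero-dimensionality to conclude $\rank(N)+|E^b(\Gamma)|=r-1$ (surjectivity alone gives only $\geq$), and that equality to conclude trivalence, the chain as written has a genuine gap.

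The repair is an ingredient you already have, deployed in the opposite order. Bound the source dimension from \emph{above} first: for a stable rational tree, $ov(\Gamma)=\sum_{w}(\deg(w)-3)\ge 0$ and the handshake plus tree relations give $\dim\CM^{\tr}_{[\Gamma]}=r-1-ov(\Gamma)\le r-1=\dim\CP$ (this is your $|E^b|\le r-3$, which you invoke only at the very end). Now surjectivity of $\mathrm{ev}_{[\Gamma],\QQ}$, which genuinely does follow from genericity, forces $r-1-ov(\Gamma)\ge r-1$, hence $ov(\Gamma)=0$ (trivalence) \emph{and} $\varrho_\QQ$ is an isomorphism in one stroke; the latter gives $\CE^1_\QQ=0$ (hence $\CE^1=0$ by torsion-freeness, as you say) and finiteness of $\CE^2$, after which your ``large characteristic'' argument for $k$-regularity via Remark~\ref{rem:propE}(ii) is exactly the paper's. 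One caveat worth noting for both your version and the paper's: the numerical coincidence $\dim\CM^{\tr}_{[\Gamma]}=\rank(N)+|E^b(\Gamma)|\le r-1$ with equality iff $\Gamma$ is trivalent holds as stated only when $\rank(N)=2$, which is the regime in which the hypothesis \eqref{eq:dim} is calibrated.
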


\begin{proof}
Consider the linear map\index{$\vartheta$}
\begin{equation}\label{eq:defGQ}
\vartheta\:\bigoplus_{w\in V^f(\Gamma)}N\oplus\bigoplus_{\gamma\in E^b(\Gamma)}\ZZ\to \bigoplus_{\gamma\in E^b(\Gamma)}N
\end{equation}
given by composing the map $\theta$ of \eqref{eq:defreg} with the projection on the first summand. Then the space of parameterized tropical curves having the same combinatorial type, slopes, and multiplicities as the curve $h\:(\Gamma;\be)\to N_\RR$ can be identified naturally with the cone in $\CE_\QQ^1(\Gamma,h):=\Ker(\vartheta_\QQ)$ consisting of those elements whose projection to $\bigoplus_{\gamma\in E^b(\Gamma)}\QQ$ have no non-positive coordinates.

Since $\Gamma$ has no cycles $\bigoplus_{w\in V^f(\Gamma)}N_\QQ\to \bigoplus_{\gamma\in E^b(\Gamma)}N_\QQ$ is surjective, and hence so is \eqref{eq:defGQ}. Thus,
\begin{equation}\label{eq1}
\dim\left(\CE_\QQ^1(\Gamma,h)\right)=2|V^f(\Gamma)|-|E^b(\Gamma)|.
\end{equation}
Recall that for any tropical curve one has
\begin{equation}\label{eq2}
3|V^f(\Gamma)|+|V^\infty(\Gamma)|+ov(\Gamma)=2|E^b(\Gamma)|+2|E^\infty(\Gamma)|,\
\end{equation}
where $ov(\Gamma)=\sum_{w\in V^f(\Gamma)}(val(w)-3)$. In our case $\Gamma$ is stable and rational, thus $ov(\Gamma)\ge 0$ and
\begin{equation}\label{eq3}
|V^f(\Gamma)|=|E^b(\Gamma)|+1.
\end{equation}
After subtracting \eqref{eq3} from\eqref{eq2} and substituting to \eqref{eq1} one concludes:
$$
\dim\left(\CE_\QQ^1(\Gamma,h)\right)=|E^\infty(\Gamma)|-ov(\Gamma)-1=r-1-ov(\Gamma).
$$

Consider the natural projection
\begin{equation}\label{eq:mapphiq}\index{$\varrho$}
\varrho=\varrho^{(\Gamma,h)}\:\CE^1(\Gamma,h)\to\bigoplus_{i=1}^r(N/L_i)\oplus\bigoplus_{i=1}^s\ZZ.
\end{equation}
By the assumption, the curve $h\:(\Gamma;\be)\to N_\RR$ corresponds to the $\varrho_\QQ$-preimage of a general point. Since, there are only finitely many combinatorial types of  parameterized stable rational tropical curves with given degree and boundary multiplicity profile, we may assume that our constraint does not belong to the union of the spans of all non-maximal dimensional images of $\varrho_\QQ^{(\Gamma',h')}$. We conclude that $\varrho_\QQ$ is surjective, and hence $r-1-ov(\Gamma)\ge s+\sum_{i=1}^r \rank(N/L_i)=r-1$ by \eqref{eq:dim}. Thus, $ov(\Gamma)=0$, i.e., $\Gamma$ is three-valent, and $\varrho_\QQ$ is an isomorphism. Hence $|\CE^2(\Gamma, h; \bO^{\tr}, \blambda^{\tr})|<\infty$ and $\CE^1(\Gamma, h; \bO^{\tr}, \blambda^{\tr})=\Ker(\varrho)=0$. It follows now that $(\Gamma,h;\bO^{\tr},\blambda^{\tr})$ is $k$-regular as soon as the characteristic of $k$ does not divide the order of $\CE^2(\Gamma, h; \bO^{\tr}, \blambda^{\tr})$.
\end{proof}

\begin{rem}
Corollary~\ref{cor:comcount} reduces our complex enumerative problem to a tropical enumerative problem of finding an explicit description for $\CW^\tr$. A naive approach to the tropical problem would be the following:

Start by preparing a complete list of three-valent trees with $r$ ordered ends. Since the graph is a tree, either all its edges are ends or there exists a vertex whose star contains exactly one non-end. Thus, balancing condition determines the slope and the multiplicity of the non-end since the slopes and the multiplicities of the ends are given ($n_1,\dotsc, n_r$). Proceeding by induction, one obtains a combinatorial type of a parameterized stable rational tropical {\em pseudo-curve} $h\:(\Gamma; \be)\to N_\RR$, where by pseudo-curve we mean that the lengths of the bounded edges are allowed to be arbitrary reals. For a given combinatorial type, one considers the complex \eqref{eq:defreg}$\otimes_\ZZ\QQ$. Then the $E^b(\Gamma)$-components of $\theta^{-1}_\QQ(\bold0,\bzeta^\tr,\blambda^\tr)$ provide all possible ways to equip $\Gamma$ with edge lengths. Finally, thanks to Lemma~\ref{lem:trcurgenconstr}, if $\theta_\QQ$ is an isomorphism and all $E^b(\Gamma)$-components of $\theta^{-1}_\QQ(\bold0,\bzeta^\tr,\blambda^\tr)$ are strictly positive then the resulting curve $h\:(\Gamma; \be)\to N_\RR$ belongs to $\CW^\tr$, and all curves in $\CW^\tr$ are obtained this way.

The number of three-valent trees with $r$ ordered ends is $(2r-1)!!$ suggesting that the naive approach is not very efficient. Even in our toy example (\S\ref{subsec:toy}), the list consists of $105$ trees, only $2$ of which do actually correspond to the parameterized tropical curves in $\CW^\tr$ (Figure~\ref{fig:picTrCRsEx}). However, we believe that there exist more efficient combinatorial ways (similar to lattice path algorithm or floor diagrams) to exhibit the set $\CW^\tr$, which would provide an algorithm to compute $\sum_{(\Gamma,h)\in\CW^\tr}m_\CCC(\Gamma, h; \bO^{\tr}, \blambda^{\tr})$. Unfortunately, currently we do not know such an algorithm, and finding one is among our future projects. Let us conclude by mentioning that the computation of the multiplicities $m_\CCC(\Gamma, h; \bO^{\tr}, \blambda^{\tr})$ is simple since $m_\CCC(\Gamma, h; \bO^{\tr}, \blambda^{\tr})=|\CE^2(\Gamma, h; \bO^{\tr}, \blambda^{\tr})|=\det(\theta)$ by its very definition. In particular, in our toy example both multiplicities are $1$.
\end{rem}

\subsection{The real case}
In this subsection we assume that $k=\CCC$, $F=\CCC(\!(t)\!)$ is the field of Laurent series, and hence $\oF$ is the field of Puiseux series. Consider the natural involution $\varsigma$\index{$\varsigma$} acting by coefficient-wise complex conjugation, and let $\oF^\varsigma=\RR(\!(t)\!)$ be its fixed field. Assume that the toric and the cross-ratio constraints are defined over $\RR(\!(t)\!)$. The goal of this section is to describe the subset $\CW^\varsigma\subseteq\CW$ of $\varsigma$-invariant points under the assumptions of the Correspondence theorem. Since the action of $\varsigma$ preserves the fibers of the tropicalization map $\Tr\:\CW\to\CW^\tr$, we fix an element $h\:(\Gamma; \be)\to N_\RR$ of $\CW^\tr$, and analyze the action of $\varsigma$ on the corresponding fiber $\CW_{\Gamma,h}$.

Recall that by Lemma~\ref{lem:trcurgenconstr}, the assumptions of assertion (2) of Realization theorem are satisfied. Let $\SSS$ be as in the proof of the theorem. Then the points in $\CW_{\Gamma,h}=\SSS(R)$ are {\em uniquely} determined by their reduction in $\SSS_\CCC$. Notice that since the constraints are defined over $\oF^\varsigma$, $\SSS(R)$ admits a natural action of $\varsigma$ compatible with the complex conjugation on the reduction $\SSS_\CCC$. Hence $\varsigma$-invariant points of $\SSS(R)$ specialize to real points of $\SSS_\CCC$, and pairs of $\varsigma$-conjugate points to pairs of complex-conjugate points.

Let us now describe the set $\SSS_\CCC(\RR)$ of real points of $\SSS_\CCC$: Consider the exact sequence \eqref{eq:defreg}$\otimes_\ZZ\RR^\times$. Then $\SSS_\CCC(\RR)$ is the preimages of the reduction $\xi$\index{$\xi$} of $(\b1,\bzeta^\Gamma,\blambda^\Gamma)$. Thus, $\SSS_\CCC(\RR)\ne\emptyset$, if and only if the class of $\xi$ in $\CE^2_{\RR^\times}(\Gamma, h; \bO^{\tr}, \blambda^{\tr})$ is trivial, and in this case
$|\SSS_\CCC(\RR)|=|\CE^1_{\RR^\times}(\Gamma, h; \bO^{\tr}, \blambda^{\tr})|$.

Let us look closer at the groups $\CE^i_{\RR^\times}(\Gamma, h; \bO^{\tr}, \blambda^{\tr})$. Pick coordinates in \eqref{eq:defreg} such that the map $\theta$ is given by a diagonal matrix $D=diag(d_i)$. Then
\begin{equation}\label{eq:realnum}
\CE^i_{\RR^\times}(\Gamma, h; \bO^{\tr}, \blambda^{\tr})=\CE^i_{\RR^\times/\RR_+}(\Gamma, h; \bO^{\tr}, \blambda^{\tr})\approx \{\pm1\}^\varepsilon,
\end{equation}
for $i=1,2$, where $\varepsilon$ denotes the number of even $d_j$-s. Denote by $\sigma(\bO,\blambda)$\index{$\sigma(\bO,\blambda)$} the image of the reduction of $(\bzeta^\Gamma,\blambda^\Gamma)$ in $\left(\bigoplus_{i=1}^r(N/L_i)\oplus\bigoplus_{i=1}^s\ZZ\right)\otimes_\ZZ\left(\RR^\times/\RR_+\right)$. We shall call $\sigma(\bO,\blambda)$ {\em the sign} of the real constraint $(\bO,\blambda)$. Then the vanishing of the class of $\xi$ in $\CE^2_{\RR^\times}(\Gamma, h; \bO^{\tr}, \blambda^{\tr})$ depends only on $\Gamma,\bO^\tr,\blambda^\tr$ and the sign of the constraint, rather than on the constraint itself.

\begin{defin}
The {\em real multiplicity} $m_\RR(\Gamma, h; \bO^{\tr}, \blambda^{\tr}, \sigma(\bO,\blambda)\!)$\index{$m_\RR(\Gamma, h; \bO^{\tr}, \blambda^{\tr}, \sigma(\bO,\blambda))$} of the parameterized rational tropical curve $h\:(\Gamma; \be)\to N_\RR$ satisfying the constraints $\bO^{\tr}, \blambda^{\tr}$ with respect to the sign $\sigma(\bO,\blambda)$  is defined to be $0$ if the image of $(\b1,\sigma(\bO,\blambda))$ in $\CE^2_{\RR^\times/\RR_+}(\Gamma, h; \bO^{\tr}, \blambda^{\tr})$ is not trivial, and $2^\varepsilon$ otherwise, where $\varepsilon$ denotes the number of groups of even order in any decomposition $\CE^2(\Gamma, h; \bO^{\tr}, \blambda^{\tr})\approx\oplus\ZZ/d_i\ZZ$.
\end{defin}

\begin{cor}\label{cor:realcount}
Assume that $\bO^{\tr}, \blambda^{\tr}$ are tropically general and \eqref{eq:dim} holds. Then for any choice of the sign $\sigma(\bO,\blambda)$ there exist real constraints $\bO_\RR,\blambda_\RR$ with given sign such that the number of real stable maps $f\:(C;\bq)\to X$ satisfying the constraints $\bO_\RR,\blambda_\RR$ is equal to
$$\sum_{(\Gamma,h)\in\CW^\tr}m_\RR(\Gamma, h; \bO^{\tr}, \blambda^{\tr}, \sigma(\bO,\blambda)).$$
\end{cor}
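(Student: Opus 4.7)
The strategy is to reduce to counting real points in each fiber of $\Tr\:\CW\to\CW^{\tr}$ and then sum over $(\Gamma,h)\in\CW^{\tr}$. I would begin by observing that by Lemma~\ref{lem:trcurgenconstr} the hypotheses of the Correspondence theorem force every $(\Gamma,h)\in\CW^{\tr}$ to be three-valent with $(\Gamma,h;\bO^{\tr},\blambda^{\tr})$ being $k$-regular and $\CE^1(\Gamma,h;\bO^{\tr},\blambda^{\tr})=0$; consequently assertion~(2) of Theorem~\ref{thm:realization} applies, so each fiber $\CW_{\Gamma,h}$ consists of exactly $|\CE^2(\Gamma,h;\bO^{\tr},\blambda^{\tr})|$ stable maps, all defined over a common complete discretely valued field of definition $K\subset\oF^\varsigma$ of $\bO,\blambda$ and all the (finitely many) tropical curves simultaneously.

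Next I would construct the required real constraints. Given $\sigma(\bO,\blambda)\in\left(\bigoplus_{i=1}^r(N/L_i)\oplus\bigoplus_{i=1}^s\ZZ\right)\otimes(\RR^\times/\RR_+)$ together with the tropical data $(\bO^{\tr},\blambda^{\tr})$, I need to exhibit $(\bO_\RR,\blambda_\RR)$ over $\oF^\varsigma=\RR(\!(t)\!)$ with this sign and tropicalization. The map sending $a\in(\oF^\varsigma)^\times$ to the pair consisting of its valuation and its class in $\RR^\times/\RR_+$ is surjective, so each coordinate of $(\bO,\blambda)$ may be lifted by choosing the sign of its leading coefficient appropriately. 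The tropical genericity conditions are cut out by finitely many proper rational affine subspaces, so a tropically general real lift exists.

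Fix such a real constraint and some $(\Gamma,h)\in\CW^{\tr}$, and let $\SSS$ denote the $K^o$-scheme from the proof of Theorem~\ref{thm:realization}. That proof established $\SSS$ \'etale over $K^o$ with reduction $\SSS_\CCC$ of cardinality $|\CE^2|$ and reduction bijection $\SSS(R)=\CW_{\Gamma,h}\xrightarrow{\sim}\SSS_\CCC$. Since the target point $(\b1,\bzeta^\Gamma,\blambda^\Gamma)$ of $\Theta$ is $\varsigma$-invariant, the whole construction descends to $(K^o)^\varsigma$, so $\varsigma$ acts compatibly on source and reduction; \'etaleness then yields a bijection between $\varsigma$-fixed points of $\SSS(R)$ and the real points $\SSS_\CCC(\RR)$ already analyzed in the text preceding the definition of $m_\RR$. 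By that analysis, $\SSS_\CCC(\RR)$ is a torsor over $\CE^1_{\RR^\times}(\Gamma,h;\bO^{\tr},\blambda^{\tr})$ when the class of the reduction $\xi$ of $(\b1,\bzeta^\Gamma,\blambda^\Gamma)$ vanishes in $\CE^2_{\RR^\times}(\Gamma,h;\bO^{\tr},\blambda^{\tr})$, and is empty otherwise. Invoking~\eqref{eq:realnum}, the non-zero count is $2^\varepsilon$, and reducing modulo $\RR_+$ identifies the vanishing condition with the vanishing of the image of $(\b1,\sigma(\bO,\blambda))$ in $\CE^2_{\RR^\times/\RR_+}$. This gives exactly $m_\RR(\Gamma,h;\bO^{\tr},\blambda^{\tr},\sigma(\bO,\blambda))$ real curves in the fiber, and summing over $(\Gamma,h)\in\CW^{\tr}$ yields the claimed formula.

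The main technical point I expect to demand care is the $\varsigma$-equivariant descent used in the third paragraph: one must confirm that an \'etale $K^o$-scheme cut out by $\varsigma$-invariant equations descends to $(K^o)^\varsigma$, so that the reduction isomorphism $\SSS(R)\xrightarrow{\sim}\SSS_\CCC$ intertwines the two conjugation actions and every real point of $\SSS_\CCC$ lifts uniquely to a real point of $\SSS(R)$. This rests on the standard fact that for \'etale morphisms the formation of points commutes with Galois descent along the unramified extension $\RR[\![t]\!]\hookrightarrow\CCC[\![t]\!]$; once it is in hand, the remaining steps are essentially bookkeeping with the already-computed structure of $\CE^i_{\RR^\times}$.
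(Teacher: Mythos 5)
Your analysis of the $\varsigma$-action on each fiber of $\Tr$ is correct and matches the discussion the paper carries out just before the definition of $m_\RR$: the points of $\SSS(R)=\CW_{\Gamma,h}$ are determined by their reductions in $\SSS_\CCC$, the $\varsigma$-invariant ones correspond to $\SSS_\CCC(\RR)$, and \eqref{eq:realnum} together with the vanishing of the class of $\xi$ gives exactly $m_\RR(\Gamma, h; \bO^{\tr}, \blambda^{\tr}, \sigma(\bO,\blambda))$ of them. But this only counts $\varsigma$-invariant elements of $\CW$, i.e.\ stable maps defined over $\RR(\!(t)\!)$ satisfying constraints over $\RR(\!(t)\!)$. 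The corollary asserts something stronger: the existence of honest real constraints $\bO_\RR,\blambda_\RR$ (over $\RR$, with $t$ specialized) for which the number of real stable maps equals the tropical sum. Your second paragraph explicitly takes $(\bO_\RR,\blambda_\RR)$ to live over $\oF^\varsigma=\RR(\!(t)\!)$, which misreads the statement; the subsequent Remark in the paper confirms that the intended constraints are ``the position obtained by a small enough specialization of a tropically general $\RR(\!(t)\!)$-constraint.''

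The missing step is the specialization argument that constitutes essentially the whole of the paper's proof. One chooses $\bO,\blambda$ with the prescribed tropicalization and sign so that all $\zeta_i,\lambda_i$ are \emph{convergent} fractional power series with real coefficients; then all (finitely many) curves in $\CW$ are defined over the subfield of convergent fractional power series, and for real $t$ small enough one can evaluate. One must then check three things: (a) $\varsigma$-invariant curves specialize to real curves and $\varsigma$-conjugate pairs to complex-conjugate pairs; (b) distinct curves specialize to distinct curves for $t$ small enough; and (c) \emph{every} complex curve satisfying the specialized constraints $\bO_\RR,\blambda_\RR$ arises as such a specialization --- this last point uses Corollary~\ref{cor:comcount}, since the total number of specialized curves is $\sum m_\CCC$, which is the complex count for general constraints. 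Without (c) you cannot rule out additional real solutions invisible to the tropical picture, so your argument as written does not yet yield the corollary.
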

\begin{proof}
Choose any constraint $\bO,\blambda$ with tropicalizations $\bO^\tr,\blambda^\tr$ and given sign such that all $\zeta_i,\lambda_i$ are {\em convergent fractional power series with real coefficients}. Then all curves in $\CW$ are defined over the subfield of convergent fractional power series, and there are only finitely many such curves. Thus, for real $t$ small enough the $\varsigma$-invariant curves specialize to real curves and the pairs of $\varsigma$-conjugate curves specialize to pairs of complex-conjugate curves satisfying the constraints $\bO_\RR,\blambda_\RR$. Since distinct curves specialize to distinct curves for $t$ small enough, all complex curves satisfying the constraints $\bO_\RR,\blambda_\RR$ are obtained this way by Corollary~\ref{cor:comcount}. In particular, the number of real stable maps is equal to the number of $\varsigma$-invariant curves, which is equal to $\sum_{(\Gamma,h)\in\CW^\tr}m_\RR(\Gamma, h; \bO^{\tr}, \blambda^{\tr}, \sigma(\bO,\blambda))$ by \eqref{eq:realnum}.
\end{proof}

\begin{rem}(i) By its very definition, the real multiplicity $m_\RR(\Gamma, h; \bO^{\tr}, \blambda^{\tr}, \sigma(\bO,\blambda))$ is bounded above by the complex multiplicity $m_\CCC(\Gamma, h; \bO^{\tr}, \blambda^{\tr})$, and the inequality is strict in many cases, e.g., if $m_\CCC(\Gamma, h; \bO^{\tr}, \blambda^{\tr})$ is not a power of $2$. In such cases the complex count differs from the real count for any ``tropical" position of real constraints, i.e., the position obtained by a small enough specialization of a tropically general $\RR(\!(t)\!)$-constraint as in the proof of the corollary.

(ii) If one chooses the sign to be totally positive, i.e., the unit element of the group $\left(\bigoplus_{i=1}^r(N/L_i)\oplus\bigoplus_{i=1}^s\ZZ\right)\otimes_\ZZ\left(\RR^\times/\RR_+\right)$, then the class of $\xi$ in $\CE^2_{\RR^\times}(\Gamma, h; \bO^{\tr}, \blambda^{\tr})$ is trivial for any $(\Gamma,h)\in\CW^\tr$. Thus, such choices of sign give rise to the maximal possible number of real stable maps $f\:(C;\bq)\to X$ satisfying real constraints in ``tropical" position.

(iii) As one expects, unlike the algebraically closed case, the answer to the real enumerative problem does depend on the position of the constraints. Indeed, it is easy to construct examples such that there exists $(\Gamma,h)\in\CW^\tr$ for which $|\CE^2(\Gamma, h; \bO^{\tr}, \blambda^{\tr})|$ is even. Since the curve $\Gamma$ is rational, the complex \eqref{eq:defreg} is quasi-isomorphic to a complex of the form
$$N\oplus\bigoplus_{\gamma\in E^b(\Gamma)}\ZZ\to \bigoplus_{i=1}^r(N/L_i)\oplus\bigoplus_{i=1}^s\ZZ.$$
Thus, $\left(\bigoplus_{i=1}^r(N/L_i)\oplus\bigoplus_{i=1}^s\ZZ\right)\otimes_\ZZ\RR^\times$ maps surjectively onto $\CE^2_{\RR^\times}(\Gamma, h; \bO^{\tr}, \blambda^{\tr})$, and we can choose the constraints $\bO',\blambda'$ with tropicalizations $\bO^\tr,\blambda^\tr$ such that the class of $\xi$ in $\CE^2_{\RR^\times}(\Gamma, h; \bO^{\tr}, \blambda^{\tr})$ is non-trivial. Hence $|\CW_{\Gamma,h}^\varsigma|=0$. Specializing $t$ to a small positive real number we obtain constraints $\bO'_\RR,\blambda'_\RR$ for which the number of real stable maps $f\:(C;\bq)\to X$ satisfying the constraints $\bO'_\RR,\blambda'_\RR$ is strictly smaller than in the case of the totally positive sign.
\end{rem}

\section{Afterword}
\subsection{More on parameterized tropical curves satisfying general constraints}
\begin{prop}\label{prop:trcurgenconstr}
Under the assumptions of Lemma~\ref{lem:trcurgenconstr}, let $w',w''\in V^f(\Gamma)$ be finite vertices, $\gamma'\in E(\Gamma)$ and edge, $[w',w'']$ the geodesic path joining $w'$ to $w''$, and $[w',\gamma']$ the geodesic path containing $\gamma'$ whose end points are $w'$ and one of the endpoints of $\gamma'$. Then
\begin{enumerate}
\item $h(w')=h(w'')$ if and only if $h$ contracts $[w',w'']$;
\item if $h(w')\in h(\gamma')$ then $h([w',\gamma'])$ is a straight interval;
\item the number of contracted edges attached to $w'$ is either one or three.
\end{enumerate}
\end{prop}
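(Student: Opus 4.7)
My plan is to derive all three parts from the rigidity established in Lemma~\ref{lem:trcurgenconstr}: under its hypotheses, the projection $\varrho_\QQ\colon\CE^1_\QQ(\Gamma,h)\to\bigoplus_{i=1}^r(N/L_i)_\QQ\oplus\bigoplus_{i=1}^s\QQ$ is a $\QQ$-linear isomorphism. Recall that $\CE^1_\QQ(\Gamma,h)=\Ker(\vartheta_\QQ)$ is canonically identified with the tangent space at $h$ to the moduli of parametrized tropical curves of the same combinatorial type $(\Gamma,\{n_\gamma\})$, via pairs $(a_w,b_\gamma)$ of vertex displacements and edge-length variations satisfying $a_{\fh(\gamma)}-a_{\ft(\gamma)}=b_\gamma n_\gamma$. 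Any linear incidence on $h$ defines a linear subvariety $Z$ of this moduli space, and $\varrho_\QQ(Z)$ is a proper affine subspace of constraint space whenever $Z$ is a proper subvariety. Enlarging the finite family of proper subspaces implicit in the definition of tropical genericity to include all such $\varrho_\QQ(Z)$---over all combinatorial types, pairs $w',w''$, and triples $w',\gamma'$---is harmless since the enlarged family is still finite. Under each hypothesis of (1)--(3), tropical genericity of $(\bO^\tr,\blambda^\tr)$ then forces the corresponding $Z$ to coincide with the full moduli, a purely combinatorial condition from which the desired conclusion follows.

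For (1), the ``if'' direction is immediate. Conversely, write $[w',w'']=\gamma_1\cup\dots\cup\gamma_d$ with signs $\epsilon_j=\pm 1$ tracking orientation along the path. The incidence $h(w')=h(w'')$ is the linear condition $\sum_{j=1}^d\epsilon_j|\gamma_j|n_{\gamma_j}=0$, defining a subvariety $Z$ of moduli. This $Z$ equals the full moduli if and only if every $n_{\gamma_j}=0$, i.e.\ the path is contracted; otherwise, varying $|\gamma_{j_0}|$ for some $j_0$ with $n_{\gamma_{j_0}}\ne 0$ produces a tropical curve of the same combinatorial type violating the condition. Rigidity and genericity thus force the path to be contracted.

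Part (2) is entirely parallel: the incidence $h(w')\in h(\gamma')$ is the condition $\sum_{\gamma_j\subseteq[w',\ft(\gamma')]}\epsilon_j|\gamma_j|n_{\gamma_j}\in\QQ n_{\gamma'}$, and the associated $Z$ is the full moduli precisely when every $n_{\gamma_j}$ along $[w',\gamma']$ lies in $\QQ n_{\gamma'}$, i.e.\ $h([w',\gamma'])$ is a straight interval (the case $n_{\gamma'}=0$ reducing to~(1)). For (3), trivalency of $\Gamma$ gives $w'$ three incident edges $\gamma_1,\gamma_2,\gamma_3$; if exactly two of them are contracted, then two of $n_{\gamma_1},n_{\gamma_2},n_{\gamma_3}$ vanish, and the balancing relation $\sum_{i=1}^3\pm n_{\gamma_i}=0$ at $w'$ forces the third to vanish as well, a contradiction. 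Hence the number of contracted edges at $w'$ avoids $2$; it is $0$, $1$, or $3$, and under the natural reading that $w'$ lies in the subgraph of contracted edges the count is $1$ or $3$ as stated.

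The main obstacle is purely technical: to verify that the enlarged exclusion set in the tropical-genericity argument is finite. Since there are only finitely many combinatorial types of trivalent rational parametrized tropical curves with given $r$, boundary multiplicities, and slope data (the combinatorial type determines all interior slopes via iterated balancing from the boundary), and each type contributes only finitely many pairs $(w',w'')$ and triples $(w',\gamma')$, the resulting family of proper affine subspaces of constraint space is finite, and tropical genericity of $(\bO^\tr,\blambda^\tr)$ suffices to avoid them all simultaneously.
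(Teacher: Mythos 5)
Your proof is correct and follows essentially the same route as the paper's: genericity of the constraint together with the isomorphism $\varrho_\QQ$ of Lemma~\ref{lem:trcurgenconstr} forces any combinatorially determined linear condition satisfied by the curve to hold identically on $\CE^1_\QQ(\Gamma,h)$, and the surjectivity of the projection onto the edge-length coordinates (implicit in your ``vary $|\gamma_{j_0}|$'' step, and made explicit in the paper) then forces the relevant slopes to vanish, resp.\ to be proportional to $n_{\gamma'}$ in part (2). Part (3) is the same balancing-plus-trivalence observation as in the paper, and your caveat about the value $0$ matches the intended reading of the statement.
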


\begin{proof}
(1) The ``if'' part is clear. For the ``only if'' part, assume that $h(w')=h(w'')$, and let $\eta\in \CE_\QQ^1(\Gamma,h)\subset \bigoplus_{w\in V^f(\Gamma)}N_\QQ\oplus\bigoplus_{\gamma\in E^b(\Gamma)}\QQ$ be the class of the curve $h\:\Gamma\to N_\RR$. Then $\eta$ satisfies the equation
\begin{equation}\label{eq:constr}
\sum_{\gamma\subset[w',w'']} a_\gamma n_\gamma=0.
\end{equation}
But \eqref{eq:constr} depends only on the slopes and the multiplicities of $h\:\Gamma\to N_\RR$, and since \eqref{eq:mapphiq} is an isomorphism, $\eta\in \CE_\QQ^1(\Gamma,h)$ is general. Thus, \eqref{eq:constr} holds true identically on $\CE_\QQ^1(\Gamma,h)$. On the other hand, the projection $\CE_\QQ^1(\Gamma,h)\to \bigoplus_{\gamma\in E^b(\Gamma)}\QQ$ is surjective since so is $\bigoplus_{w\in V^f(\Gamma)}N_\QQ\to \bigoplus_{\gamma\in E^b(\Gamma)}N_\QQ$. Thus, \eqref{eq:constr} holds true identically on $\bigoplus_{\gamma\in E^b(\Gamma)}\QQ$, and hence $n_\gamma=0$ for all $\gamma\in [w',w'']$, i.e., $h$ contracts $[w',w'']$.

The proof of (2) is similar, but the analog of \eqref{eq:constr} is considered modulo the slope of $\gamma'$. We leave the details to the reader. Finally, (3) follows from the balancing condition since $\Gamma$ is three-valent by Lemma~\ref{lem:trcurgenconstr}.
\end{proof}
\subsection{The dual subdivision}

Under the assumptions of Lemma~\ref{lem:trcurgenconstr}, assume that $N\simeq\ZZ^2$, and let $\Delta$ be the Newton polygon of $h\:\Gamma\to\QQ^2$. One may hastily conclude that the dual subdivision of $\Delta$ consists of triangles and $2k$-gons with parallel opposite edges. However, this need not be the case since, for example, there may exist flattened three-valent vertices as the example on Figure~\ref{fig:pic1} shows. Nevertheless, it is fairly easy to show using Lemma~\ref{lem:trcurgenconstr} and Proposition~\ref{prop:trcurgenconstr} that the dual subdivision of $\Delta$ consists of $2k$-gons with $k$ pairs of parallel {\em equi-length} edges (Minkowski sums of $k$ intervals), triangles, trapezoids, pentagons (resp. hexagons) having two (resp. three) pairs of parallel edges.

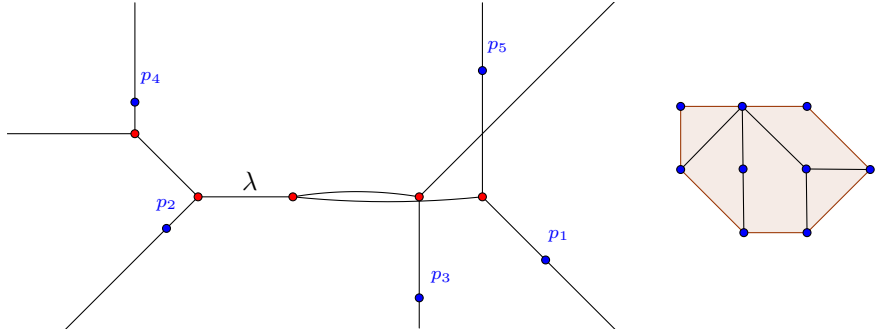
\begin{figure}
\definecolor{zzttqq}{rgb}{0.6,0.2,0.}
\definecolor{qqqqff}{rgb}{0.,0.,1.}
\definecolor{ffqqqq}{rgb}{1.,0.,0.}
\begin{tikzpicture}[line cap=round,line join=round,>=triangle 45,x=0.42cm,y=0.42cm]
\clip(-0.04,-5.1575) rectangle (30.155,5.1475);
\fill[color=zzttqq,fill=zzttqq,fill opacity=0.1] (21.275,1.865) -- (21.275,-0.135) -- (23.275,-2.135) -- (25.275,-2.135) -- (27.275,-0.135) -- (25.275,1.865) -- cycle;
\draw (4.,1.) -- (4.,5.1475);
\draw [domain=-0.040000000000004025:4.0] plot(\x,{(-3.-0.*\x)/-3.});
\draw (4.,1.)-- (6.,-1.);
\draw [domain=-0.040000000000004025:6.0] plot(\x,{(--7.-1.*\x)/-1.});
\draw (6.,-1.)-- (9.,-1.);
\draw (13.,-1.) -- (13.,-5.1575);
\draw [domain=13.0:30.15500000000002] plot(\x,{(-84.--6.*\x)/6.});
\draw (15.,-1.) -- (15.,5.1475);
\draw [domain=15.0:30.15500000000002] plot(\x,{(--28.-2.*\x)/2.});
\draw [shift={(11.,-13.)}] plot[domain=1.4056476493802699:1.7359450042095235,variable=\t]({1.*12.165525060596437*cos(\t r)+0.*12.165525060596437*sin(\t r)},{0.*12.165525060596437*cos(\t r)+1.*12.165525060596437*sin(\t r)});
\draw [shift={(12.,28.)}] plot[domain=4.6093073718619895:4.81547058890739,variable=\t]({1.*29.1547594742265*cos(\t r)+0.*29.1547594742265*sin(\t r)},{0.*29.1547594742265*cos(\t r)+1.*29.1547594742265*sin(\t r)});
\draw (7.1,0.1075) node[anchor=north west] {$\lambda$};
\draw [color=zzttqq] (21.275,1.865)-- (21.275,-0.135);
\draw [color=zzttqq] (21.275,-0.135)-- (23.275,-2.135);
\draw [color=zzttqq] (23.275,-2.135)-- (25.275,-2.135);
\draw [color=zzttqq] (25.275,-2.135)-- (27.275,-0.135);
\draw [color=zzttqq] (27.275,-0.135)-- (25.275,1.865);
\draw [color=zzttqq] (25.275,1.865)-- (21.275,1.865);
\draw (23.225,1.865)-- (21.275,-0.135);
\draw (23.225,1.865)-- (23.275,-2.135);
\draw (23.225,1.865)-- (25.25,-0.1175);
\draw (25.25,-0.1175)-- (27.275,-0.135);
\draw (25.25,-0.1175)-- (25.275,-2.135);
\begin{scriptsize}
\draw [fill=ffqqqq] (4.,1.) circle (1.5pt);
\draw [fill=qqqqff] (4.,2.) circle (1.5pt);
\draw[color=qqqqff] (4.515,2.7625) node {$p_4$};
\draw [fill=ffqqqq] (6.,-1.) circle (1.5pt);
\draw [fill=qqqqff] (5.,-2.) circle (1.5pt);
\draw[color=qqqqff] (5.005,-1.2425) node {$p_2$};
\draw [fill=ffqqqq] (9.,-1.) circle (1.5pt);
\draw [fill=ffqqqq] (13.,-1.) circle (1.5pt);
\draw [fill=ffqqqq] (15.,-1.) circle (1.5pt);
\draw [fill=qqqqff] (15.,3.) circle (1.5pt);
\draw[color=qqqqff] (15.495,3.7525) node {$p_5$};
\draw [fill=qqqqff] (17.,-3.) circle (1.5pt);
\draw[color=qqqqff] (17.42,-2.2325) node {$p_1$};
\draw [fill=qqqqff] (13.,-4.2) circle (1.5pt);
\draw[color=qqqqff] (13.7,-3.5425) node {$p_3$};
\draw [fill=qqqqff] (21.275,1.865) circle (1.5pt);
\draw [fill=qqqqff] (21.275,-0.135) circle (1.5pt);
\draw [fill=qqqqff] (23.275,-2.135) circle (1.5pt);
\draw [fill=qqqqff] (25.275,-2.135) circle (1.5pt);
\draw [fill=qqqqff] (27.275,-0.135) circle (1.5pt);
\draw [fill=qqqqff] (25.275,1.865) circle (1.5pt);
\draw [fill=qqqqff] (23.225,1.865) circle (1.5pt);
\draw [fill=qqqqff] (25.25,-0.1175) circle (1.5pt);
\draw [fill=qqqqff] (23.24978440868614,-0.11775269489142325) circle (1.5pt);
\end{scriptsize}
\end{tikzpicture}
\caption{A parameterized tropical curve in dimension 2 with a flattened three-valent vertex, and the corresponding dual subdivision of the Newton polygon for $r=5,d=7,s=1,p_{1i}=p_i$.}
\label{fig:pic1}
\end{figure}

\subsection{An algebraic approach to Realization theorem}\label{subsec:algproof}
The goal of this subsection is to give a sketch of an elementary algebraic proof of Realization theorem under the assumption that $(\Gamma, h; \bO^{\tr}, \blambda^{\tr})$ is $k$-regular. We shall only explain the proof of assertion (1). Assertion (2) can be proved along the same lines, and we leave it to the reader. We start in the same way we proved the theorem, and introduce the map $\Theta$ and the scheme $\SSS$ such that $\SSS(R)=\CW$.

The idea now is very simple: We think about $K^o$-points $\xi\in\SSS(K^o)$ as solutions of the system of equations $\Theta(\xi)=(\b1,\bzeta^\Gamma,\blambda^\Gamma)$, and we construct a sequence of approximate solutions, i.e., $\xi_d\in T_{N}^{V^f(\Gamma)}(K^o)\times T_{\ZZ}^{E^b(\Gamma)}(K^o)\times B(K^o)$\index{$\xi_d$} for $d\ge 0$, such that for all $d$ the following holds: $\xi_{d+1}\equiv \xi_d\mod (\pi^d)$ and
$$\Theta(\xi_d)\equiv (\b1,\bzeta^\Gamma,\blambda^\Gamma) \mod(\pi^d).$$
Thus, there exists $\xi:=\lim_{d\to\infty}\xi_d\in T_{N}^{V^f(\Gamma)}(K^o)\times T_{\ZZ}^{E^b(\Gamma)}(K^o)\times B(K^o)$ since $K^o$ is complete, and $\Theta(\xi)\equiv (\b1,\bzeta^\Gamma,\blambda^\Gamma) \mod(\pi^d)$ for all $d$. Hence $\xi\in\SSS(K^o)$ is the desired point.

Fix once and for all a point $\bbeta\in B$. We shall construct the sequence $\xi_d$, whose $B$-component is $\bbeta$ for all $d$. Denote by $\Theta_{\bbeta}$ the restriction of $\Theta$ to $(b=\bbeta)$-locus. By $k$-regularity, the reduction of $\Theta_{\bbeta}$ modulo $\pi$ is surjective, and hence there exists $\xi_0$ satisfying the requirements. Assume by induction that we have constructed $\xi_0,\dotsc, \xi_d$ as needed, and for $w\in V^f(\Gamma)$, $\gamma\in E^b(\Gamma)$, let $\chi_w\:N\to \GG_d$, $\alpha_\gamma\:\ZZ\to \GG_d$ be homomorphisms, where $\GG_d:=1+(\pi^{d+1})\lhd (K^0)^\times$\index{$\GG_d$}. We shall look for $\xi_{d+1}$ of the form $\xi_d\cdot(\bchi,\balpha,\b1)$. Plainly, any such $\xi_{d+1}$ satisfies: $\xi_{d+1}\equiv\xi_d\mod (\pi^d)$.
\begin{lem}\label{lem:fundlem}
For any $d\ge 0$, if $\balpha\equiv \balpha'\mod (\pi^d)$ and $\bbeta\equiv \bbeta'\mod (\pi^{d+1})$ then
$$\begin{array}{cc}
    \phi_\gamma(\balpha,\bbeta)\equiv \phi_\gamma(\balpha',\bbeta') & \mod (\pi^{d+1}),\\
    \varphi_j(\balpha,\bbeta)\equiv \varphi_j(\balpha',\bbeta') & \mod (\pi^{d+1}),\\
    \psi_i(\balpha,\bbeta)\equiv \psi_i(\balpha',\bbeta') & \mod (\pi^{d+1}).
  \end{array}
$$
\end{lem}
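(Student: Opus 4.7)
The plan is to reduce the three congruences to a single Taylor-stability principle: if $P(\boldsymbol\tau,\bbeta) \in R[\boldsymbol\tau,\bbeta]$ is a polynomial in the variables $\boldsymbol\tau := (\pi^{|\gamma|\fe_K}\alpha_\gamma)_{\gamma \in E^b(\Gamma)}$ and $\bbeta$ with $R$-coefficients, then $P(\boldsymbol\tau,\bbeta) \equiv P(\boldsymbol\tau',\bbeta') \mod \pi^{d+1}$ under the hypotheses of the lemma, and the same holds for $P/Q$ whenever $Q$ takes values in $R^\times$. Indeed, since $|\gamma|\fe_K \geq 1$ for every bounded edge, the hypothesis $\alpha_\gamma - \alpha'_\gamma \in (\pi^d)$ upgrades to $\tau_\gamma - \tau'_\gamma \in (\pi^{d+1})$, matching the hypothesis $\beta_\gamma - \beta'_\gamma \in (\pi^{d+1})$; the claim then follows by a monomial-by-monomial Taylor expansion.

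The work thus reduces to writing each of $\phi_\gamma$, $\varphi_j$, $\psi_i$ as a ratio of polynomials in $(\boldsymbol\tau,\bbeta)$ with $R^\times$-valued denominator. The ``good'' building blocks $y_w(q_i)$ with $u_i \succ w$ are already polynomials in $(\boldsymbol\tau,\bbeta)$ via Proposition~\ref{prop:coord}(4), since the geodesic $[w,v_i]$ is directed upwards and only the forward operator $\Psi_\gamma(y) = \beta_\gamma + \tau_\gamma y$ appears. The ``bad'' factors, involving $y_w(q_i)$ with $u_i \not\succ w$ (of negative valuation), are processed via the identity $y_{\ft(\gamma)}(q_i) - \beta_\gamma = \tau_\gamma y_{\fh(\gamma)}(q_i)$, a specialisation of Proposition~\ref{prop:coord}(2) at $q_{\iota(\gamma)}$: iterating this down the tree re-expresses each bad factor as a quotient of good polynomials multiplied by a power of $\tau_\gamma$; the extracted $\tau_\gamma$-exponents sum to zero inside each product thanks to $\sum_i n_i = 0$ together with the balancing at each finite vertex, exactly as in the derivation of equation~\eqref{eq:phie}. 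The $R^\times$-valuedness of the remaining denominators follows from Proposition~\ref{prop:coord}(1) and~(6): sibling $\beta$-differences are units, and so are the quantities $y_{\ft(\gamma)}(q_i) - \beta_\gamma$ for $i \in I_{\ft(\gamma)}^\infty \setminus I_{\fh(\gamma)}^\infty$.

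The main technical obstacle is the cross-ratio function $\psi_i$: unlike $\phi_\gamma$ and $\varphi_j$, its definition mixes coordinates at three distinct vertices $w_i, w'_i, w''_i$ on a common geodesic. I would apply Proposition~\ref{prop:coord}(3) to convert the $y_{w_i}$-values and $y_{w'_i}$-values into $y_{w''_i}$-values, each conversion introducing a chain of $\tau_\gamma$-factors along the traversed subpath $[w_i,w''_i]$ or $[w''_i,w'_i]$. Because $w''_i$ is by definition the minimum of $[w_i,w'_i]$ and the four points $q_{i1},\ldots,q_{i4}$ are paired as in Mikhalkin's sign convention~\eqref{eq:epsilonei}, the $\tau$-exponents accumulated in the numerator and denominator of $\psi_i$ match along each segment of the geodesic and so cancel cleanly, leaving $\psi_i$ as a ratio of polynomials in $(\boldsymbol\tau,\bbeta)$ with unit denominator. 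Applying the stability principle concludes the proof.
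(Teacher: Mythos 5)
The paper offers no proof of this lemma (it is declared straightforward and left to the reader), so there is nothing to compare against; your argument is correct and supplies exactly the intended mechanism, namely that $\balpha$ enters $\phi_\gamma,\varphi_j,\psi_i$ only through $\tau_\gamma=\pi^{|\gamma|\fe_K}\alpha_\gamma$ with $|\gamma|\fe_K\ge 1$, so the mod-$\pi^{d}$ hypothesis on $\balpha$ upgrades to a mod-$\pi^{d+1}$ hypothesis on the actual arguments, after which one only needs each function to be a ratio of integral polynomials in $(\boldsymbol\tau,\bbeta)$ with unit denominator. The one point you should state more carefully is that your stability principle requires the $\tau$-monomials extracted from the ``bad'' factors to cancel \emph{literally as monomials} (mere equality of valuations would not suffice, since dividing by a non-unit $\tau_\gamma$ degrades the congruence); this does hold, because within each single factor of $\phi_\gamma$ and $\varphi_j$ the same monomial $T_i$ clears both numerator and denominator, while for $\psi_i$ a point $q_{ij}$ with $u_{ij}\not\succ w$ can occur only when $w''_i$ coincides with the relevant endpoint of $[w_i,w'_i]$, in which case the identical monomial appears in exactly one numerator factor and the matching denominator factor.
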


The proof of the lemma is straight-forward and is left to the reader. Let $\theta'$ be the map in \eqref{eq:defreg}$\otimes_\ZZ(K^o)^\times$. Then, by Lemma~\ref{lem:fundlem}, $$\Theta(\xi_d\cdot(\bchi,\balpha,\b1))\equiv\Theta(\xi_d)\cdot\theta'(\bchi,\balpha)\mod (\pi^{d+1}).$$
Thus, it is sufficient to find $(\bchi,\balpha)$ such that
\begin{equation}\label{eq:last}
\theta'(\bchi,\balpha)\equiv \left(\Theta(\xi_d)\right)^{-1}(\b1,\bzeta^\Gamma,\blambda^\Gamma)\mod (\pi^{d+1}).
\end{equation}
By $k$-regularity, the order of $\CE^2(\Gamma, h; \bO^{\tr}, \blambda^{\tr})\approx\oplus\ZZ/d_i\ZZ$ does not divide the characteristic of $k$, and hence $\CE^2_{\GG_d}(\Gamma, h; \bO^{\tr}, \blambda^{\tr})\approx(\oplus\ZZ/d_i\ZZ)\otimes_\ZZ (\GG_d)^\times=1$. Indeed, all we have to check is that the $d_i$-power maps $\GG_d\to \GG_d$, are surjective for all $i$. But $K^o$ is complete, and the denominators of the expansion of $(1+\pi^{d+1}y)^{1/{d_i}}$ do not vanish. Thus, the maps are surjective, and hence \eqref{eq:last} admits a solution as needed.

\printindex

\bibliographystyle{amsalpha}

\end{document}